\definecolor{ForestGreen}{rgb}{0.1,0.6,0.05}
\definecolor{EgyptBlue}{rgb}{0.063,0.1,0.6}
\newtheorem{definition}{Definition}[section]
\newtheorem{theorem}[definition]{Theorem}
\newtheorem{remark}[definition]{Remark}
\newtheorem{proposition}[definition]{Proposition}
\newtheorem{lemma}[definition]{Lemma}
\numberwithin{equation}{section}
\DeclarePairedDelimiter\abs{\lvert}{\rvert}%
\DeclarePairedDelimiter\norm{\lVert}{\rVert}%
\let\oldabs\abs
\def\abs{\@ifstar{\oldabs}{\oldabs*}}
\let\oldnorm\norm
\def\norm{\@ifstar{\oldnorm}{\oldnorm*}}
\newcommand{\al} {\alpha}
\newcommand{\pa} {\partial}
\newcommand{\de} {\delta}
\newcommand{\De} {\Delta}
\newcommand{\Om} {\Omega}
\newcommand{\la} {\lambda}
\newcommand{\Gr} {\nabla}
\newcommand{\no} {\nonumber}
\newcommand{\noi} {\noindent}
\newcommand{\ra} {\rightarrow}
\newcommand{\wide}[1]{\widetilde{#1}}
\def\dx{{\,\rm d}x}
\def\dt{{\rm d}t}
\def\dS{{\rm dS}}
\def \tr {\textcolor{red}}
\def\inpr#1{\left\langle #1\right\rangle}
\def\sb2{{{\mathcal D}^{1,2}_0(B_1^c)}}
\def\C{{\mathcal C}}
\def\M{{\mathcal M}}
\def\S{{\mathcal S}}
\def\R{{\mathbb R}}
\def\F{{\mathcal F}}
\def\({{\Big(}}
\def\){{\Big)}}
\def\ws2{{\F_{\frac{N}{2}}}}
\def\l2{{ L^{1,\;\infty}(\log L)^2}}
\def\M2{\mathcal M_{\log L}}
\def\c1Loc{{\C_{loc}^1}}
\title{On the strict monotonicity of the first eigenvalue\\ of the $p$-Laplacian on annuli}
\author{T. V. Anoop, Vladimir Bobkov\footnote{The author was supported by the project LO1506 of the Czech Ministry of Education, Youth and Sports.}\;\footnote{corresponding author}\,, Sarath Sasi}
\date{}
\begin{document}
 \maketitle
 \textbf{Abstract.}
Let $B_1$ be a ball in $\mathbb{R}^N$ centred at the origin and $B_0$ be a smaller ball compactly contained in $B_1$. 
For $p\in(1, \infty)$, using the shape derivative method, we show that the first eigenvalue of the $p$-Laplacian in annulus $B_1\setminus \overline{B_0}$ strictly decreases as the inner ball moves towards the boundary of the outer ball. 
The analogous results for the limit cases as $p \to 1$ and $p \to \infty$ are also discussed.
Using our main result, further we prove the nonradiality of the eigenfunctions associated with the points on the first nontrivial curve of the Fu\v{c}ik spectrum of the $p$-Laplacian on bounded radial domains.


 
 \noindent
 {\bf Mathematics Subject Classification (2010):} \tr{ 35J92, 35P30, 35B06, 49R05}.\\
 {\bf Keywords:}
 $p$-Laplacian, symmetries, shape derivative, Fu\v{c}ik spectrum, eigenvalue, eigenfunction, nonradiality.
\section{Introduction}

Let $\Om \subset \R^N$ be a bounded domain with $N \geq 2$. We consider the following nonlinear eigenvalue problem:
\begin{equation}\label{evp}
\left.
\begin{aligned}
-\Delta_p u &= \lambda |u|^{p-2} u && {\rm in }\ \Om , \\
u&=0 &&{\rm on }\ \partial\Om,
\end{aligned}
\right\}
\end{equation}
where $\la \in \mathbb{R}$ and $\De_p$ is the $p$-Laplace operator given by $ \De_p u:= { \rm div}(|\Gr u|^{p-2}\Gr u)$, $p > 1$.
A real number $\la$  is called an eigenvalue of \eqref{evp} if there exists $u$ in   $W^{1,p}_0(\Om)\setminus \{0\}$ satisfying 
\begin{align*}
 \int_{\Om} | \Gr u|^{p-2} \, \left<\Gr u, \Gr v\right> \dx = \la \int_{\Om} |u|^{p-2} \, u \,v \dx,
\quad \forall \, v\in W^{1,p}_0(\Om),
\end{align*}
and $u$ is  said to be an eigenfunction associated with $\la$.

It is well known that   \eqref{evp} admits a least positive eigenvalue $\la_1(\Om)$ which has the following variational characterization:
$$
\la_1(\Om)= \inf \left\{ \int_{\Om} |\nabla u|^p\dx:u \in W_0^{1,p}(\Om)\setminus \{0\} \text{ with } \norm{u}_p=1 \right\}.
$$
In this article we consider $\Om$  of the form $B_{R_1}(x)\setminus \overline{B_{R_0}(y)}$  with $\overline{B_{R_0}(y)} \subset B_{R_1}(x)$, where $B_r(z)$ denotes the open ball of radius $r>0$ centred at $z\in \R^N$. 
Since the $p$-Laplacian is invariant under orthogonal transformations, it can be easily seen that
$$
\la_1(B_{R_1}(x)\setminus \overline{B_{R_0}(y)})=\la_1(B_{R_1}(0)\setminus \overline{B_{R_0}(se_1)})
$$
for any $x,y \in \mathbb{R}^N$  such that $|x-y|=s$, where $e_1$ is the first coordinate vector.  Let the annular region $B_{R_1}(0)\setminus \overline{B_{R_0}(se_1)}$ be denoted by $\Om_s$ and  let   $$\la_1(s):=\la_1(\Om_s).$$
We are interested in the behaviour of $\la_1(s)$ with respect to $s$ (in other words, with respect to the distance between centres of the inner and outer balls).
The main objective of this article is to show that $\la_1(s)$ is strictly decreasing on $[0, R_1-R_0)$ for any $p>1$.

Apparently the first result in this direction was obtained by Hersch in \cite{Hersch1963}, where he proved (in the case $N=2$, $p=2$ and even for more general annular domains) that $\la_1(s)$ attains its maximum at $s=0$. In \cite{Ramm1998}, Ramm and Shivakumar conjectured\footnote{Later a  proof  for this  conjecture using an argument attributed to M. Ashbaugh was published in arxiv:math-ph/9911040 by the same authors.} that $\la_1(s)$ is strictly decreasing and they gave numerical results to support this claim.  Later this conjecture and its higher dimensional analogue were proved independently by Harrel et al.\ \cite{HarrelKroger} and Kesavan  \cite{Kesavan}. 
Their proofs  mainly rely on  the following expression for $\lambda_1'(s)$ obtained using the Hadamard perturbation formula (see \cite{garabedian1952,sokolowski}):
\begin{equation}\label{Form12D}
\la_1'(s)= -\int\limits_{x \in \partial B_{R_0}(se_1)}\abs{\frac{\pa u_s}{\pa n}(x)}^2 n_1(x)\, \dS(x),
\end{equation}
where $u_s$ is the positive eigenfunction  associated with  $\lambda_1(s)$ with the normalization  $\norm{u_s}_{2} = 1$, and $n_1$ is the first component of $n = (n_1,\dots,n_N),$  the outward unit normal to $\Om_s$. 
In \cite{HarrelKroger,Kesavan}, the authors used the above formula in conjunction with reflection techniques and the strong comparison principle to show that $\lambda_1'(s)$ is negative on $(0,R_1-R_0)$.
For further reading and related open problems on this topic, we refer the reader to the books \cite{ashbaugh2007,henrot2006}.

For general $p>1,$ it is natural to  anticipate that $\la_1(s)$ is strictly decreasing on $[0, R_1-R_0)$. Indeed, we have the following generalization of formula \eqref{Form12D}: 
\begin{equation}\label{Form1}
\la_1'(s) = -(p-1)\int\limits_{x \in \partial B_{R_0}(se_1)}\abs{\frac{\pa u_s}{\pa n}(x)}^p n_1(x)\, \dS(x).
\end{equation}
The above expression was derived in  \cite{Anisa2} using the  Hadamard perturbation formula (shape derivative formula)  for $\la_1'(s)$ obtained in \cite{Melian2001}. However for $p\ne 2$, one lacks a strong comparison principle that  guarantee the strict monotonicity of $\la_1(s).$ More precisely, the strong comparison principle that is applicable for the  nonlinear nonhomogeneous problems of the following type:
\begin{equation}\label{nonhomo}
\begin{aligned}
-\Delta_p u = \lambda |u|^{p-2}u && {\rm in }\ \Om ,
\quad u=g &&{\rm on }\ \partial\Om.
\end{aligned}
\end{equation}
Thus one can not directly extend the ideas of \cite{Ramm1998,HarrelKroger,Kesavan}  to the nonlinear case and establish the strict monotonicity of $\la_1(s)$ for general $p>1$.
Nevertheless, in \cite{Anisa2}, Chorwadwala and Mahadevan could show that $\la_1'(s) \leq 0 $ for all $s \in [0, R_1-R_0)$  using a \textit{weak} comparison principle  proved in \cite{chorwadwala2015faber} for problems of the form \eqref{nonhomo}. However,  the authors of \cite{Anisa2} could  not  rule out even the possibility of $\la_1(s)$ being a constant, due to the absence of the \textit{strong} comparison principle. 
In this article, we bypass the usage of the strong comparison principle and prove the following result.
\begin{theorem}\label{MainTHM}
 Let $p \in (1, \infty)$ and let $\la_1(s)$ be the first eigenvalue of $-\De_p$ on $\Om_s$. Then  
 $$
 \la_1'(0) = 0  ~\text{ and }~ \la_1'(s)<0, \  \forall \,s\in (0, R_1-R_0).
 $$
 In particular, $\la(s)$ is strictly decreasing on $[0, R_1-R_0).$
\end{theorem}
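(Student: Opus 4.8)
The plan is to combine the Hadamard formula \eqref{Form1} with a reflection across the hyperplane through the centre of the inner ball, and then to upgrade the non-strict bound $\la_1'(s)\le0$ of \cite{Anisa2} to a strict one by a local argument near the inner sphere that avoids any strong comparison principle.

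First I would dispose of $s=0$: $\Om_0$ is a concentric annulus, so by simplicity of $\la_1$ the first eigenfunction $u_0$ is radial, hence $\abs{\pa u_0/\pa n}$ is constant on $\pa B_{R_0}(0)$ and $\int_{\pa B_{R_0}(0)}n_1\,\dS=0$, making the right-hand side of \eqref{Form1} vanish. For fixed $s\in(0,R_1-R_0)$ let $\si$ be the reflection across $T:=\{x_1=s\}$ and put $\Om_s^{\pm}:=\Om_s\cap\{\pm(x_1-s)>0\}$. The identities $\abs{\si(x)}^2=\abs{x}^2+4s(s-x_1)$ and $\abs{\si(x)-se_1}=\abs{x-se_1}$ show $\si(\Om_s^+)\subsetneq\Om_s^-$, so $w:=u_s\circ\si$ is defined on $\Om_s^+$ and solves the same eigenvalue equation there. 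On $\pa\Om_s^+$ one has $u_s\le w$: both vanish on the inner-sphere arc $\Ga:=\pa B_{R_0}(se_1)\cap\{x_1\ge s\}$, they agree on the equatorial disk $\Om_s\cap T$ (where $\si=\mathrm{id}$), and $u_s=0<w$ on the outer-sphere part, since $\si$ carries it strictly into $\Om_s$.

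Next I would apply the weak comparison principle of \cite{chorwadwala2015faber} to $u_s$ and $w$ on $\Om_s^+$. This is legitimate because $\la_1(s)<\la_1(\Om_s^+)$: were they equal, the zero-extension of a first eigenfunction of $\Om_s^+$ would minimise the Rayleigh quotient on $\Om_s$, hence be a first eigenfunction of $\Om_s$ and therefore strictly positive on $\Om_s$ by the strong maximum principle, contradicting its vanishing on $\Om_s^-$. The comparison principle gives $u_s\le w$ on $\Om_s^+$; since $u_s,w\in C^{1,\al}$ up to the smooth arc $\Ga$ and both vanish there, it follows that $\abs{\pa u_s/\pa n}\le\abs{\pa w/\pa n}$ on $\Ga$, while the chain rule gives $\abs{\pa w/\pa n}(x)=\abs{\pa u_s/\pa n}(\si x)$. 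Writing $x=se_1+R_0\om$ in \eqref{Form1} (so $n_1=-\om_1$), splitting the integral at $\{\om_1=0\}$ and substituting $\om\mapsto\si\om$ in the half $\{\om_1<0\}$, one obtains
$$\la_1'(s)=-(p-1)R_0^{N-1}\int_{\{\om_1>0\}}\Big(\abs{\tfrac{\pa u_s}{\pa n}(\si(se_1+R_0\om))}^p-\abs{\tfrac{\pa u_s}{\pa n}(se_1+R_0\om)}^p\Big)\om_1\,\mathrm{d}\om\le0,$$
which reproves the result of \cite{Anisa2}; strictness is equivalent to $\pa(w-u_s)/\pa n\not\equiv0$ on $\Ga$.

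The main point — and the main obstacle — is to exclude the borderline case. Suppose $\la_1'(s_0)=0$ for some $s_0\in(0,R_1-R_0)$. Then $z:=w-u_{s_0}\ge0$ satisfies $z=0$ and $\pa z/\pa n=0$ a.e.\ on $\Ga$. Near $\Ga$ the positive eigenfunction $u_{s_0}$ vanishes on a smooth sphere and has nonvanishing gradient there (boundary point lemma of Vázquez), and likewise for $w$; hence in a one-sided neighbourhood of $\Ga$ the $p$-Laplacian is uniformly elliptic and $z$ solves a linear uniformly elliptic equation with continuous coefficients, so Hopf's lemma forces $z\equiv0$ there. Thus $u_{s_0}=u_{s_0}\circ\si$ in a full neighbourhood of the inner sphere inside $\Om_{s_0}$. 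I would then propagate this symmetry across $\Om_{s_0}\cap\si(\Om_{s_0})$, a connected open set on which $u_{s_0}$ and $u_{s_0}\circ\si$ both solve the eigenvalue equation and coincide on an open subset; on the open dense set $\{\Gr u_{s_0}\ne0\}$ the equation is non-degenerate with analytic structure, so analytic/unique continuation yields $u_{s_0}=u_{s_0}\circ\si$ up to a portion of the outer sphere $\pa B_{R_1}(0)\cap\{x_1>s_0\}$ — which is absurd, since there $u_{s_0}\to0$ while $u_{s_0}\circ\si$ stays bounded away from $0$ ($\si$ maps that portion strictly into $\Om_{s_0}$). This contradiction gives $\la_1'(s)<0$ on $(0,R_1-R_0)$, hence strict monotonicity. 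The delicate step is this last propagation: for $p=2$ it is automatic, as $z$ solves $-\De z=\la_1(s_0)z$ and the strong maximum principle applies globally on $\Om_{s_0}^+$; for $p\ne2$ one only has the weak comparison principle, so the passage from ``$z\ge0$'' to ``$z\equiv0$'' must be routed through the region where the $p$-Laplacian is non-degenerate, and the critical set $\{\Gr u_{s_0}=0\}$ must be shown not to obstruct the continuation. This is exactly the difficulty caused by the lack of a strong comparison principle for \eqref{nonhomo}, and where the real work lies.
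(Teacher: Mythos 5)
Your proposal is correct up to and including the local symmetry near the inner sphere: the derivation of $\la_1'(0)=0$, the reflection argument giving $\la_1'(s)\le 0$, and the step showing that $\la_1'(s_0)=0$ forces $u_{s_0}=u_{s_0}\circ\widetilde\si_{e_1}$ in a one-sided neighbourhood of $\partial B_{R_0}(s_0e_1)$ (via uniform ellipticity of the linearised operator where the radial derivative of $u_{s_0}$ is nonzero, plus Hopf) all match the paper's Lemma~\ref{symmetry-affine-vertical}. But the step you yourself flag as ``where the real work lies'' is a genuine gap, and it is precisely the heart of the theorem. Propagating the identity $u_{s_0}=u_{s_0}\circ\si$ from a neighbourhood of the inner sphere to the outer sphere by unique continuation requires (a) that $u_{s_0}$ be real-analytic, which holds only on $\{\nabla u_{s_0}\neq 0\}$, and (b) that this set connect the inner neighbourhood to the outer boundary within $\Om_{s_0}^+\cap\si(\Om_{s_0})$. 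Nothing controls the critical set $\{\nabla u_{s_0}=0\}$ of a first eigenfunction on a non-radial domain; it can a priori contain a hypersurface separating the two boundary components, and unique continuation for the $p$-Laplacian across such degeneracy is not available for $p\neq 2$. So the contradiction is not reached.

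The paper closes this gap by a different mechanism that never crosses the critical set. It derives a \emph{second} Hadamard formula, \eqref{Form2}, expressing $\la_1'(s)$ through the normal derivative of $u_s$ on the \emph{outer} sphere (obtained by perturbing the outer ball rather than the inner one). Assuming $\la_1'(s_0)=0$, the reflection argument is then run twice: once near the inner sphere with affine hyperplanes through $s_0e_1$ (your argument), and once near the outer sphere with hyperplanes through the origin, each time only inside the maximal annular shell on which the relevant radial derivative is nonvanishing, so ellipticity is never lost. Combined with the symmetries in the directions orthogonal to $e_1$, this makes $u_{s_0}$ radial about $s_0e_1$ on $A_{r_0,R_0}(s_0e_1,s_0e_1)$ and radial about $0$ on $A_{R_1,r_1}(0,0)$, with $\nabla u_{s_0}=0$ on $\partial B_{r_0}(s_0e_1)$ and on no sphere $\partial B_r(0)$ with $r>r_1$. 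An energy comparison with the concentric annulus (inserting a constant plateau into a test function, using $\la_1(s_0)\le\la_1(0)$) forces $r_0=r_1=\bar r$, the critical radius of the radial eigenfunction on $\Om_0$; then the point $(r_0+s_0)e_1$ lies on $\partial B_{r_0}(s_0e_1)$, so $\nabla u_{s_0}$ vanishes there, yet $|(r_0+s_0)e_1|=r_1+s_0>r_1$, a contradiction. If you want to complete your proof you should adopt this two-boundary strategy (or find an independent justification of unique continuation through the critical set, which is a hard open issue for the $p$-Laplacian).
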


For our proof, we  derive another formula for $\lambda_1'(s)$ (in terms of the normal derivative of $u_s$ on the outer boundary) in the following form:
\begin{equation}\label{Form2}
\la_1'(s) = (p-1)\int\limits_{x \in \partial B_{R_1}(0)}\abs{\frac{\pa u_s}{\pa n}(x)}^p n_1(x)\, \dS(x).
\end{equation}
We obtained the above expression  by considering the perturbations of $\Om_s$ generated by shifts of the outer ball. On the other hand,  formula \eqref{Form1} was obtained in~\cite{Anisa2} by considering the perturbations  generated by shifts of the inner ball. 
If we assume $\lambda_1'(s) = 0$ for some $s \in (0, R_1-R_0)$, then formulas \eqref{Form1} and \eqref{Form2} help us to
show that the first eigenfunction $u_s$ associated with $\la_1(s)$ is radial (up to a translation) in some annular neighbourhoods of the inner and outer boundaries of $\Om_s$. 
This eventually leads to a contradiction.

\medskip
Next we study the monotonicity property of the corresponding limit problems. To avoid the ambiguity, for each $p>1,$  here we denote the first eigenvalue  $\la_1(s)$  by $\la_1(p,s).$  It is known that $\lim\limits_{p \to \infty} \la_1^{1/p}(p,s)$ and  $\lim\limits_{p \to 1} \la_1(p,s)$ exist, see \cite{juutinen1999,Kawohl-Fridman}. We denote the limit functions as below:
\begin{equation*}\label{LL}
\Lambda_\infty(s) := \lim\limits_{p \to \infty} \la_1^{1/p}(p,s)  
\quad \text{and} \quad 
\Lambda_1(s) := \lim\limits_{p \to 1} \la_1(p,s).
\end{equation*}
Now we state  results analogous to Theorem \ref{MainTHM}.

\begin{theorem}\label{THM2}
	Let $\Lambda_\infty(s)$ and $\Lambda_1(s)$ be defined as before. Then $\Lambda_\infty(s)$ and $\Lambda_1(s)$ are continuous on $[0, R_1-R_0)$ and
	\begin{enumerate}
		\item[\normalfont{(i)}] 
			$\Lambda_\infty(s)$  is strictly decreasing on $[0, R_1-R_0)$;
		\item[\normalfont{(ii)}] 
			$\Lambda_1(s)$ is  decreasing on $[0, R_1-R_0)$. Moreover, there exists $s^* \in [0, R_1-R_0)$ such that $\Lambda_1(0) = \Lambda_1(s^*) > \Lambda_1(s)$ for all $s \in (s^*, R_1-R_0)$.
	\end{enumerate}
\end{theorem}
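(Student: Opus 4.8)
The plan is to transfer the strict monotonicity of $\la_1(p,s)$ from Theorem \ref{MainTHM} to the two limit functions by a combination of uniform-in-$p$ estimates and a careful passage to the limit, exploiting that for \emph{fixed} $p$ we already know $\la_1(p,\cdot)$ is strictly decreasing. First I would record the variational characterizations of the limit quantities: it is classical that $\Lambda_\infty(s) = 1/\rho_{\Om_s}$, where $\rho_{\Om_s} = \max\{\operatorname{dist}(x,\partial\Om_s): x\in\Om_s\}$ is the inradius of $\Om_s$, and that $\Lambda_1(s) = h(\Om_s)$ is the Cheeger constant of $\Om_s$, i.e. $h(\Om_s) = \inf\{P(E)/|E| : E\subset\Om_s\}$ over sets of finite perimeter. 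These identities (from \cite{juutinen1999,Kawohl-Fridman}) reduce the theorem to geometric statements about how the inradius and the Cheeger constant of the annulus $\Om_s = B_{R_1}(0)\setminus\overline{B_{R_0}(se_1)}$ vary with $s$.

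For part (i), the inradius of $\Om_s$ is elementary: the largest inscribed ball sits on the axis on the side opposite to the displaced inner ball, giving $\rho_{\Om_s} = \tfrac12\bigl((R_1+R_0) + s\bigr)$ for $s$ in the admissible range, which is strictly increasing in $s$; hence $\Lambda_\infty(s) = 2/\bigl(R_1+R_0+s\bigr)$ is strictly decreasing and continuous on $[0,R_1-R_0)$. Alternatively, and in the spirit of the paper, one can avoid computing the inradius outright: since $\Om_s \subsetneq \Om_{s'}$ up to a rigid motion is \emph{false}, I instead use the monotonicity already proven for finite $p$ together with the locally uniform convergence $\la_1^{1/p}(p,s)\to\Lambda_\infty(s)$. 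Monotone limits of strictly decreasing functions are only nonincreasing, so to upgrade to strict decrease I would show that the convergence $\la_1^{1/p}(p,s)\to\Lambda_\infty(s)$ is uniform on compact subsets of $[0,R_1-R_0)$ (which follows from equicontinuity, itself a consequence of domain-monotonicity-type bounds and the Lipschitz dependence obtained by comparing test functions on $\Om_s$ and $\Om_{s'}$), and then combine this with the explicit formula for the inradius; continuity is immediate from the explicit formula.

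For part (ii), the Cheeger constant requires more care because $h(\Om_s)$ need not be strictly monotone all the way down — the Cheeger minimizer of $\Om_0$ may be a ball that still fits inside $\Om_s$ for small $s$, which is exactly why the statement only claims a plateau $\Lambda_1(0)=\Lambda_1(s^*)$ followed by strict decrease. The key steps: (a) $h$ is monotone under inclusion, and although $\Om_s$ is not nested in $\Om_{s'}$, one shows $h(\Om_s)\le h(\Om_{s'})$ for $s\ge s'$ by a direct comparison of Cheeger sets after noting that any subset of $\Om_{s'}$ can be modified into a competitor for $\Om_s$ of no greater ratio — more robustly, one invokes the continuity of $\la_1(p,s)$ in $s$ (uniform in $p$ on compacta), takes $p\to1$ in $\la_1(p,s)\le\la_1(p,s')$, and gets $\Lambda_1(s)\le\Lambda_1(s')$, so $\Lambda_1$ is nonincreasing; (b) continuity of $\Lambda_1$ on $[0,R_1-R_0)$ follows from the locally uniform convergence $\la_1(p,\cdot)\to\Lambda_1(\cdot)$ plus equicontinuity of the family $\{\la_1(p,\cdot)\}$; (c) for the strict part, let $s^*:=\sup\{s:\Lambda_1(s)=\Lambda_1(0)\}$, which is well-defined and lies in $[0,R_1-R_0)$ because $\Lambda_1(s)=h(\Om_s)\to\infty$ is false but $h(\Om_s)$ is bounded below and one checks $\Lambda_1$ cannot be globally constant (e.g. by a volume/perimeter estimate showing $h(\Om_s)<h(\Om_0)$ once the inner ball is close enough to $\partial B_{R_1}$), and then for $s>s^*$ strict monotonicity is shown by a Cheeger-set argument: if $\Lambda_1(s_1)=\Lambda_1(s_2)$ for some $s^*\le s_1<s_2$, a Cheeger set $E$ of $\Om_{s_2}$ has the same ratio in $\Om_{s_1}$, forcing $E$ (up to the relevant rigid motions) to be a Cheeger set of both, but the geometry of the two annuli then pins $E$ down to a configuration incompatible with $s_1<s^*$ \emph{or} with $s_2>s^*$, a contradiction.

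The main obstacle I expect is step (c) of part (ii): locating $s^*$ precisely and, above all, ruling out equality of the Cheeger constant at two \emph{distinct} parameter values beyond the plateau. Unlike the eigenvalue case — where Theorem \ref{MainTHM} hands us strict monotonicity for every finite $p$ — the limit $p\to1$ collapses the strict inequalities, so one genuinely needs a fresh argument at the level of Cheeger sets, analyzing which sets of finite perimeter can simultaneously minimize $P(E)/|E|$ in two different annuli. The uniform equicontinuity of $s\mapsto\la_1(p,s)$ (used repeatedly for continuity and for passing inequalities to the limit) is routine but must be stated carefully so that the constants do not degenerate as $p\to1$ or $p\to\infty$; I would obtain it by inserting translated/rescaled test functions and tracking the resulting Lipschitz constant in $p$ via the normalization $\norm{u}_p=1$.
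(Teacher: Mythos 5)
Your overall strategy for part (i) and for the non-strict monotonicity and continuity in part (ii) matches the paper's: use the characterizations $\Lambda_\infty(s)=1/r_{\max}$ and $\Lambda_1(s)=h(\Om_s)$, obtain monotonicity by passing to the limit in the finite-$p$ inequalities from Theorem~\ref{MainTHM}, and compute the inradius explicitly. One slip in (i): the inradius of $\Om_s$ is $\tfrac12(R_1-R_0+s)$, not $\tfrac12(R_1+R_0+s)$ --- the largest inscribed ball fills the gap of width $R_1-R_0+s$ on the side opposite to the displaced inner ball; the strict-decrease conclusion survives this error. Also, for the continuity of $\Lambda_1$ the paper does not go through uniform-in-$p$ equicontinuity of $\la_1(p,\cdot)$ (which would be delicate to justify as $p\to1$ and which you only assert); it instead invokes the continuity of the Cheeger constant under the domain perturbations \eqref{eq:perturb}, citing \cite{parini2015shape}.

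The genuine gap is in your step (c). First, you have misread the claim: the theorem does not assert that $\Lambda_1$ is strictly decreasing on $(s^*,R_1-R_0)$; it only asserts $\Lambda_1(s)<\Lambda_1(s^*)=\Lambda_1(0)$ there, which follows at once from monotonicity, continuity, and the single fact that $h(s_0)<h(0)$ for \emph{some} $s_0<R_1-R_0$. Your proposed rigidity argument --- ruling out $\Lambda_1(s_1)=\Lambda_1(s_2)$ for distinct $s_1,s_2>s^*$ by analyzing sets that minimize $P(E)/|E|$ in two different annuli --- is therefore not needed, and it is aimed at exactly the question the paper declares open in Remark~\ref{rem:cheeger}, so you should not expect to complete it. Second, the one fact you do need, namely $h(s_0)<h(0)$ for $s_0$ close to $R_1-R_0$, cannot come from ``a volume/perimeter estimate'' on $\Om_s$ itself: $|\partial\Om_s|/|\Om_s|$ is independent of $s$ (translating the inner ball changes neither quantity), and since the concentric annulus is calibrable this common value \emph{equals} $h(0)$, so the whole domain is never a strictly better competitor. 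The paper's substantive step is an explicit competitor: for $s_\eps=\sqrt{R_1^2-\eps^2}-R_0$ one removes from $\Om_s$ a thin ``lens'' near the point where the inner ball almost touches the outer sphere; the excised spherical caps carry surface area of order $\eps^{N-1}$, while the newly created lateral surface is only of order $\eps^{N}$ and the lost volume of order $\eps^{N+1}$, so the ratio strictly drops. This construction is the heart of part (ii) and is missing from your proposal.
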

We use a geometric characterization of $\Lambda_\infty(s)$ given in \cite{juutinen1999} for proving part (i), and for the existence of $s^*$ in part (ii) we use a variational characterization of $\Lambda_1(s)$ given in \cite{Kawohl-Fridman}.

\medskip
Finally,  we study the following Fu\v{c}ik eigenvalue problem: 
\begin{equation}\label{F}
\left.
\begin{aligned}
-\Delta_p u &= \alpha (u^+)^{p-1} - \beta (u^-)^{p-1} &&\text{in }~ \Omega,\\
u &= 0 &&\text{on }~ \partial\Omega,
\end{aligned}
\right \}
\end{equation}
where $\alpha,\beta$ are real numbers (spectral parameters) and $u^\pm := \max\{\pm u, 0\}.$ 
If problem \eqref{F} possesses a nontrivial solution for some $(\alpha, \beta)$, then we say that $(\alpha, \beta)$ belongs to the Fu\v{c}ik spectrum of \eqref{F}.

In \cite{Cuesta-Fucik}, the authors considered a set of  critical values $c(t)$ given by
\begin{equation}\label{eq:Fucik}
c(t):= \inf_{\gamma \in \Gamma}
\max_{u \in \gamma[-1,1]} \left(\int\limits_{\Omega}|\nabla u|^p \, \dx - t \int\limits_{\Omega}(u^+)^p \, \dx
\right),
\end{equation}
where
\begin{align}
\label{Gamma}
\Gamma &:= \{\gamma \in \C([-1,1], \S):~
\gamma(-1) = -\varphi_1,~ 
\gamma(1) = \varphi_1\}, \\
\notag
\S &:= \{u \in W_0^{1,p}(\Omega):~ \|u\|_p=1\},
\end{align}
and $\varphi_1$ is the first eigenfunction of  \eqref{evp} with the normalization $\|\varphi_1\|_p = 1$. Note that $c(0)=\la_2(\Omega)$, the second eigenvalue of \eqref{evp}.
Using $c(t)$, the authors gave a description of the \textit{first nontrivial} curve $\mathscr{C}$ of the Fu\v{c}ik spectrum of \eqref{F} as the union of the points $(t+c(t), c(t))$, $t \geq 0$, and their reflections with respect to the diagonal $(t,t)$. Further,  they shown that $\mathscr{C}$ is continuous and each eigenfunction associated with a point on $\mathscr{C}$ has exactly two nodal domains (see Theorem 2.1 of \cite{Cuesta-Nodal}).

In \cite{Bartsch2005}, Bartsch et al.\ conjectured that in the linear case ($p=2$) any eigenfunction corresponding to a point on $\mathscr{C}$ is nonradial in a bounded radial domain (i.e., $\Omega$ is a ball or annulus). In the same article, they showed that the conjecture holds in a neighbourhood of $(\lambda_2(\Omega), \lambda_2(\Omega))$ (see Remark~5.2 of \cite{Bartsch2005}).
  A complete proof of this conjecture was given by Bartsch and Degiovanni in \cite{Bartsch} by estimating generalized Morse indices of corresponding eigenfunctions. In \cite{Benedikt2},
Benedikt et al.\  gave a different proof for this conjecture for a ball in $\R^N$ with $N=2$ and $N=3$. In this article,
we provide another proof for this conjecture for any bounded radial domain and  even extend this results  for general $p\in (1,\infty)$.
\begin{theorem}\label{FucikTHM}
 Let $p\in (1,\infty)$ and  $\Om$ be a  bounded radial domain in $\mathbb{R}^N$, $N \geq 2$. Then any  eigenfunction associated with a point on the first nontrivial curve $\mathscr{C}$ of the Fu\v{c}ik spectrum of the problem~\eqref{F} is nonradial.
\end{theorem}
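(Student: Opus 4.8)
\emph{Plan of proof.} I would argue by contradiction: assume a point $(\al,\be)\in\mathscr{C}$ admits a radial eigenfunction $u$ and derive a violation of the minimax definition of $c(t)$. Using the symmetry of $\mathscr{C}$ about the diagonal together with the fact that $-u$ is a radial eigenfunction for $(\be,\al)$, I may assume $\al\ge\be$, so that $\be=c(t)$ with $t:=\al-\be\ge 0$. Normalising $\|u\|_p=1$ and writing $\Phi_t(v):=\int_\Om|\Gr v|^p\dx-t\int_\Om (v^+)^p\dx$ (so $c(t)=\inf_{\ga\in\Ga}\max_{v\in\ga[-1,1]}\Phi_t(v)$), testing \eqref{F} against $u$ gives $\int_\Om|\Gr u|^p\dx=\al\|u^+\|_p^p+\be\|u^-\|_p^p$ and hence $\Phi_t(u)=(\al-t)\|u^+\|_p^p+\be\|u^-\|_p^p=\be=c(t)$. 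By Theorem~2.1 of \cite{Cuesta-Nodal} the function $u$ has exactly two nodal domains $\Om^{+}=\{u>0\}$ and $\Om^{-}=\{u<0\}$, and restricting \eqref{F} to each of them shows that $u|_{\Om^{+}}$, resp.\ $-u|_{\Om^{-}}$, is (up to a positive constant) the first eigenfunction of $-\Dep$ there, so $\la_1(\Om^{+})=\al$ and $\la_1(\Om^{-})=\be$. Since $u$ is radial and $C^1$ and changes sign exactly once, the nodal set is a single sphere: writing $\Om=B_{R_1}(0)\setminus\overline{B_{R_0}(0)}$ with the convention $R_0=0$, $B_0=\emptyset$ when $\Om$ is a ball, there is $\rho\in(R_0,R_1)$ with $\{\Om^{+},\Om^{-}\}=\{\,B_\rho(0)\setminus\overline{B_{R_0}(0)}\,,\,B_{R_1}(0)\setminus\overline{B_\rho(0)}\,\}$.

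The key step is to use Theorem~\ref{MainTHM} to produce a strictly cheaper competitor. For $0<s<\min\{\rho-R_0,\,R_1-\rho\}$ one has $\overline{B_{R_0}(0)}\subset B_\rho(se_1)\subset B_{R_1}(0)$, so the sphere $\pa B_\rho(se_1)$ splits $\Om$ into the disjoint open sets $E_1^{s}:=B_\rho(se_1)\setminus\overline{B_{R_0}(0)}$ and $E_2^{s}:=B_{R_1}(0)\setminus\overline{B_\rho(se_1)}$. Applying Theorem~\ref{MainTHM} to $E_2^{s}$, and (after recentring) to $E_1^{s}$, gives $\la_1(E_2^{s})<\la_1(B_{R_1}(0)\setminus\overline{B_\rho(0)})$ and $\la_1(E_1^{s})\le\la_1(B_\rho(0)\setminus\overline{B_{R_0}(0)})$, the latter strict if $R_0>0$ and an equality (by translation invariance) if $R_0=0$. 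Matching $E_1^{s}$ with the inner nodal domain, I obtain disjoint open $\Om^{+}_{s},\Om^{-}_{s}$ with $\overline{\Om^{+}_{s}}\cup\overline{\Om^{-}_{s}}=\overline\Om$, $\la_1(\Om^{+}_{s})\le\al$, $\la_1(\Om^{-}_{s})\le\be$, and at least one of these strict (both when $\Om$ is an annulus). Let $\phi_s,\psi_s>0$ be the $L^p$-normalised first eigenfunctions of $-\Dep$ on $\Om^{+}_{s},\Om^{-}_{s}$, extended by $0$; since the supports are disjoint, the arc $\nu\mapsto \nu^{1/p}\phi_s-(1-\nu)^{1/p}\psi_s$ runs in $\S$ from $-\psi_s$ to $\phi_s$ with $\Phi_t=\nu(\la_1(\Om^{+}_{s})-t)+(1-\nu)\la_1(\Om^{-}_{s})\le\max\{\al-t,\be\}=\be$, equality occurring at most at one endpoint. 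I then join $\phi_s$ to $\varphi_1$ and $-\psi_s$ to $-\varphi_1$ inside $\S$ by paths consisting of $\pm$(first eigenfunctions of balls/annuli obtained by continuously enlarging $\Om^{+}_{s}$, resp.\ $\Om^{-}_{s}$, inside $\Om$ until they fill $\Om$); choosing these deformations suitably and using Theorem~\ref{MainTHM}, domain monotonicity and scaling, one checks that for all small $s$ the quantity $\Phi_t=\la_1(\cdot)-t\cdot\mathbf 1$ stays $\le\be=c(t)$ along them, with equality at most at the initial eigenfunction and only when $\Om$ is a ball. Concatenating, I get $\ga\in\Ga$ with $\max_\ga\Phi_t\le c(t)$ and $\Phi_t<c(t)$ everywhere on $\ga$, except possibly — in the ball case — at a single sign-definite function $v_\ast$ (a first eigenfunction of a ball).

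It then remains to push the path strictly below $c(t)$ near $v_\ast$. Every critical point of $\Phi_t|_{\S}$ at the level $c(t)$ is, by the Lagrange multiplier computation, an eigenfunction of \eqref{F} for $(\al,\be)$, hence has exactly two nodal domains by Theorem~2.1 of \cite{Cuesta-Nodal}; as $v_\ast$ has only one nodal domain, it is not such a critical point. Since $\Phi_t|_{\S}$ satisfies the Palais--Smale condition (standard for these functionals; cf.\ \cite{Cuesta-Fucik}), the quantitative deformation lemma yields $\ga'\in\Ga$ agreeing with $\ga$ away from a small neighbourhood of $v_\ast$ and with $\max_{\ga'}\Phi_t<c(t)$, contradicting $c(t)=\inf_{\ga\in\Ga}\max_\ga\Phi_t$. (When $\Om$ is an annulus this last step is unnecessary, since $\Phi_t<c(t)$ already holds on all of $\ga$.) This contradiction shows that no point of $\mathscr{C}$ has a radial eigenfunction.

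I expect the main obstacle to be the explicit construction of the comparison path $\ga$ in the second step — verifying that the enlarging families of balls and annuli genuinely stay inside $\Om$ for small $s$ and that their first eigenvalues move monotonically in the way required, which is exactly where Theorem~\ref{MainTHM} (and, crucially, the precise form $\la_1'(0)=0$, $\la_1'(s)<0$) is indispensable — together with the soft minimax machinery (Palais--Smale plus the deformation lemma, and the two--nodal--domain structure of the critical points at level $c(t)$) needed to eliminate the single borderline point in the ball case. An alternative, if one can invoke it from \cite{Cuesta-Fucik,Cuesta-Nodal}, is the fact that the eigenvalue pair $(\la_1(D_1),\la_1(D_2))$ of any partition of $\Om$ into two disjoint open sets never lies strictly below $\mathscr{C}$; the competitor $\{\Om^{+}_{s},\Om^{-}_{s}\}$ would then finish the proof at once.
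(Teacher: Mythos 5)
Your proposal follows the same skeleton as the paper's proof: argue by contradiction, use Theorem~2.1 of \cite{Cuesta-Nodal} to get exactly two (radially symmetric) nodal domains with $\la_1(N^+)=t+c(t)$ and $\la_1(N^-)=c(t)$, shift the interface sphere off-centre, invoke Theorem~\ref{MainTHM} to lower the first eigenvalues of the resulting eccentric pieces, and contradict the minimax characterization of $c(t)$. The divergence is entirely in the endgame, and there your primary route has a genuine gap while your closing ``alternative'' remark is exactly what the paper does. In the ball case you correctly observe that the inner piece $E_1^s=B_\rho(se_1)$ satisfies only $\la_1(E_1^s)=\al$ (equality, by translation invariance), and you propose to absorb the resulting borderline point $v_*$ by an explicit path of first eigenfunctions of continuously enlarging balls/annuli followed by a Palais--Smale/deformation argument. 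That construction is not carried out: you would need continuity in $W^{1,p}_0$ of the normalised first eigenfunctions under domain perturbation, monotonicity of $\Phi_t$ along the family, and a \emph{localized} deformation at the level $c(t)$ that stays clear of the genuine critical points sitting at that same level (the Fu\v{c}ik eigenfunctions themselves) and fixes the endpoints $\pm\varphi_1$. None of this is needed.

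The paper's fix is a one-line observation you missed: after shifting, enlarge the inner ball slightly, replacing $B_R(se_1)$ by $B_{\widetilde R}(se_1)$ with $\widetilde R\in(R,R_1)$ close to $R$. Strict domain monotonicity gives $\la_1(B_{\widetilde R}(se_1))<t+c(t)$, while continuity of $\la_1$ with respect to the domain (Theorem~1 of \cite{Melian2001}) keeps $\la_1(A_{R_1,\widetilde R}(0,se_1))<c(t)$ for $\widetilde R$ close enough to $R$. One then has, in both the ball and the annulus case, two disjoint subdomains $\Om_1,\Om_2$ of $\Om$ with $\la_1(\Om_1)<t+c(t)$ and $\la_1(\Om_2)<c(t)$ \emph{strictly}, and the standard argument of Theorem~3.1 of \cite{Cuesta-Fucik} --- precisely the fact you state at the very end, that supports of such a pair yield an admissible path in $\Gamma$ with maximum of $\Phi_t$ strictly below $c(t)$ --- produces the contradiction with no deformation machinery at all. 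So your proof becomes complete (and coincides with the paper's) once you (a) adopt the slight-enlargement trick to make both inequalities strict, and (b) replace the hand-built path and deformation step by the citation to that published construction.
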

We obtain the above result as a  simple consequence of Theorem~\ref{MainTHM}. Moreover, Theorem~\ref{FucikTHM} gives a generalization and a simpler proof for Theorem 1.1 of \cite{ADS-AMS} which states the nonradiality of second eigenfunctions of the $p$-Laplacian on a ball.

\section{Preliminaries}\label{section:prelim}

In this section, we first introduce the reflections with respect to the hyperplanes and the affine hyperplanes.  Then we briefly describe the shape derivative formula of \cite{Melian2001} and derive the formulas~\eqref{Form1} and \eqref{Form2} for $\la_1'(s)$.  Finally we state some results which will be required in the later parts of this article.   

For a nonzero vector $a \in \mathbb{R}^N,$ let $H_a$ be the hyperplane perpendicular to $a$, i.e.,
$$
H_{a} = \{x \in \R^N:~ \left<a, x \right> = 0\}. %
$$ 
Further, we define the half-spaces
$$
\mathcal{H}_a^+ := \{x \in \R^N:~ \left<a, x \right> > 0\}, \quad  \mathcal{H}_a^{-} := \{x \in \R^N:~ \left<a, x \right> < 0\}.
$$ 
Let $\sigma_a$ be the reflection with respect  to the hyperplane $H_a$, i.e.,  
\begin{align}\label{reflectiondef}
 \sigma_a(x)=x- 2\frac{\left<a, x \right>}{|a|^2}a =  x \left [I-2 \frac{a^Ta}{|a|^2}\right],\;\forall x \in \mathbb{R}^N,
\end{align}
where the last expression  is the matrix product of the vector $x$ and the matrix $\sigma_a =I-2 \frac{a^Ta}{|a|^2}$. 
Let $\widetilde{\sigma}_{a}$ be the reflection about the affine hyperplane  $se_1+H_{a}$.
Then $\widetilde{\sigma}_{a}$ is given as below:
$$
\widetilde{\sigma}_{a}(x)=x- 2\frac{\left<a, x-se_1 \right>}{|a|^2}a= \sigma_a(x)+2\frac{\left<a,se_1 \right>}{|a|^2}a.
$$
Now we recall the set $\Om_s= B_{R_1}(0)\setminus \overline{B_{R_0}(se_1)}$ and for each nonzero vector $a$ in $\R^N$, consider the following subsets of $\Om_s$:
\begin{align*}
\mathcal{O}^+_a &:=  \Om_s \cap \mathcal{H}_a^{+};\quad \mathcal{\widetilde{O}}^+_{a} :=  \Om_s \cap \left (\mathcal{H}_a^{+}+se_1\right );\\
\mathcal{O}^-_a &:=  \Om_s \cap \mathcal{H}_a^{-}; \quad  \mathcal{\widetilde{O}}^-_{a} :=  \Om_s \cap \left (\mathcal{H}_a^{-}+se_1\right ).
\end{align*}
The relation between some of the subsets of $\overline{\Om}_s$ under the reflections are listed below:
\begin{equation}\label{reflectionlist}
\left.
\begin{aligned}
           \sigma_{a}( \mathcal{O}^+_{a}) &=\mathcal{O}^-_{a}; \quad  \widetilde{\sigma}_{a}( \mathcal{\widetilde{O}}^+_{a}) =\mathcal{\widetilde{O}}^-_{a}, \; \forall\,  a\in \R^N\setminus\{0\} \mbox{ with}  \inpr{a,e_1}= 0;  \\
                      \sigma_{a}( \mathcal{O}^+_a) &\subset   \mathcal{O}^-_{a}; \quad \widetilde{\sigma}_{a}(\mathcal{\widetilde{O}}^+_{a}) \subset  \mathcal{\widetilde{O}}^-_{a},\; \forall\, a\in \R^N \mbox{ with}  \inpr{a,e_1}> 0; \\
             \sigma_{a}(\partial B_{R_0}(se_1)\cap \partial \mathcal{O}^+_a ) &\subset  \mathcal{O}^-_{a};\quad \widetilde{\sigma}_{a}(\partial B_{R_1}(0)\cap \partial \mathcal{\widetilde{O}}^+_a ) \subset  \mathcal{\widetilde{O}}^-_{a},\; \forall\, a\in \R^N \mbox{ with} \inpr{a,e_1}> 0;\\
       \sigma_{a}(\partial B_{R_1}(0)\cap \partial \mathcal{O}^+_a )& =  \partial B_{R_1}(0)\cap \partial \mathcal{O}^-_{a}, \; \forall\, a \in \R^N \setminus \{0\};\\ 
       \widetilde{\sigma}_{a}(\partial B_{R_0}(se_1)\cap \partial\mathcal{\widetilde{O}}^+_{a} ) &=  \partial B_{R_0}(se_1)\cap \partial\mathcal{\widetilde{O}}^-_{a}, \forall\, a \in \R^N \setminus \{0\}.
\end{aligned} 
\right\}
\end{equation}

Now for a function $u$ defined on $\overline{\Om}_s$ and for a vector $a\in \R^N\setminus\{0\}$  with $\inpr{a,e_1}\ge0$  we define two new functions $u_a: \overline{\mathcal{O}^+_a} \to \R$ and $\widetilde{u}_{a} : \overline{\mathcal{\widetilde{O}}^+_a} \to \R$ as below:
$$
{u}_{a}(x):= u(\sigma_a(x)); \quad \widetilde{u}_{a}(x):=u(\widetilde{\sigma}_{a}(x)).
$$ 
By recalling the notation $\sigma_a = I-2 \frac{a^Ta}{|a|^2}$ from \eqref{reflectiondef}, for  $u\in \C^1(\overline{\Om_s})$ we see that 
\begin{equation}\label{gradient}
 \Gr u_a(x)=\Gr u (\sigma_a(x))\sigma_a, \; \forall x\in \overline{\mathcal{O}^+_a}; \quad \Gr \widetilde{u}_a(x)=\Gr u (\widetilde{\sigma}_{a}(x))\sigma_a, \; \forall x\in \overline{\mathcal{\widetilde{O}}^+_a}.
\end{equation}
Further, the normal vector satisfies the following relations:
\begin{align}\label{normal}
 n(\sigma_a(x))=n(x) \sigma_a, \, \forall\,  x\in \partial B_{R_1}(0)\cap\mathcal{O}^+_a; \quad  n(\widetilde{\sigma}_{a}(x))=n(x) \sigma_a, \, \forall\,  x\in \partial B_{R_0}(se_1)\cap\mathcal{O}^+_a.
\end{align}
 
\subsection*{Shape derivative formulas}
  For a smooth bounded vector
field $V$ on $\R^N$  consider the perturbation of $\Om_s$ given as $\wide{\Om}_t= (I+tV)\Om_s.$  It is known by  Theorem 3 of \cite{Melian2001} that
$\la_1(t,V):=\la_1(\wide{\Om}_t)$ is differentiable at $t=0$ and the derivative  is
given by 
\begin{equation}\label{Shapeformula}
\la_1'(0,V):= \lim_{t\ra 0}  \frac{\la_1(t,V)-\la_1(0,V)}{t}=-(p-1)\int\limits_{\partial\Om_s}\abs{\frac{\pa u_s}{\pa
n}(x)}^p \left<V(x), n(x)\right> \, \dS,
\end{equation} 
where $n$ is the outward unit normal to $\partial\Om_s$ and ${u_s}$ is the first eigenfunction corresponding to
$\la_1(s)$ normalized as \begin{equation}\label{normalization}
                          u_s>0 \mbox{ and } \norm{u_s}_p=1.
                         \end{equation}
In \cite{Anisa2}, the authors considered the vector field $V$ as given   below:
\begin{equation}\label{eq:perturb}
V(x)=\rho(x)e_1, \ \rho \in \C_c^\infty(B_{R_1}(0)) \ \mbox{and} \ \rho(x)\equiv 1 \ \mbox{in a neighbourhood of} \ B_{R_0}(s e_1).
\end{equation}
For this choice of $V$ and  for $t$ sufficiently small, the perturbations $\wide{\Om}_t$ of $\Om_s$ are generated by the shifts of the inner ball. More precisely,  $$\wide{\Om}_t= \Om_{s+t}.$$  Therefore, one gets $\la_1(t,V)=\la_1(s+t), \la_1(0,V)=\la_1(s)$ and hence  \eqref{Shapeformula} yields
\begin{equation}\label{form1}
  \la_1'(s)= -(p-1)\int\limits_{\partial B_{R_0}(se_1)}\abs{\frac{\pa u_s}{\pa n}(x)}^p n_1(x)\, \dS,
\end{equation}
where $n_1$ is the first component of $n,$ the outward unit normal to $\pa\Om_s$ on $\partial B_{R_0}(se_1)$ (i.e., the inward unit normal to $\pa B_{R_0}(se_1)$).

To derive the expression \eqref{Form2} for $\la'(s)$ (i.e.,  formula involving the normal derivative of $u_s$ on the outer boundary),  we consider the perturbations of $\Om_s$ generated 
by the shifts of the outer boundary. Indeed, such perturbations can be obtained by taking a vector field $V(x)=-\rho(x)e_1$ with $\rho \in \C^\infty(\R^N)$ and 
\begin{enumerate}[(i)]
	\item $\rho = 0$ in a neighbourhood of the inner sphere $\partial B_{R_0}(se_1)$;
	\item $\rho = 1$ in a neighbourhood of the outer sphere $\partial B_{R_1}(0)$.
\end{enumerate}
For this choice of $V,$  for $t$ sufficiently close to 0,  observe that 
$$
\wide{\Om}_t= B_{R_1}(-te_1)\setminus \overline{B_{R_0}(se_1)}.
$$
From the translation invariance of the $p$-Laplacian, we get
$$
\la_1(t,V)=\la_1\left(B_{R_1}(0)\setminus \overline{B_{R_0}((s+t)e_1)}\right)=\la_1(s+t).
$$ 
Now  \eqref{Shapeformula} yields
\begin{equation}\label{form2}
\la_1'(s)=\lim_{t\ra 0}  \frac{\la_1(s+t)-\la_1(t)}{t}=(p-1) \int\limits_{\partial B_{R_1}(0)} \abs{\frac{\pa u_s}{\pa
n}(x)}^p n_1(x)\, \dS,
\end{equation}
where $n_1$ is the first component of $n,$ the outward unit normal to $\pa\Om_s$ on $\partial B_{R_1}(0)$ (i.e., the outward unit normal to $\pa B_{R_1}(0)$). 

Next we rewrite the integral in \eqref{form2} using certain symmetries of the domain $\Om_s.$ Set $u=u_s$ in \eqref{form2} and  express the integral as a sum of two integrals:
\begin{equation}\label{split1}
 \int\limits_{\partial B_{R_1}(0)} \abs{\frac{\pa u}{\pa
n}(x)}^p n_1(x)\, \dS= \int\limits_{\partial B_{R_1}(0) \cap \partial\mathcal{O}^+_{e_1}} \left|
\frac{\partial {u}}{\partial n}(x)
\right|^p n_1(x) \, \dS + \int\limits_{\partial B_{R_1}(0) \cap \partial\mathcal{O}^-_{e_1}} \left|
\frac{\partial {u}}{\partial n}(x)
\right|^p n_1(x) \, \dS.
\end{equation}
From \eqref{gradient} and \eqref{normal} we have $\frac{\partial {u}}{\partial n}(x') = \frac{\partial {u_{e_1}}}{\partial n}(x)$ and $n_1(x')=-n_1(x)$ on $\partial B_{R_1}(0)\cap\mathcal{O}^+_{e_1},$ where $x' = \sigma_{e_1}(x).$ Hence, we modify the second integral as below:
\begin{align}\label{split2}
\int\limits_{\partial B_{R_1}(0) \cap \partial\mathcal{O}^-_{e_1}} \left|
\frac{\partial {u}}{\partial n}(x)
\right|^p n_1(x) \, \dS 
=&  \int\limits_{\partial B_{R_1}(0) \cap \partial\mathcal{O}^+_{e_1}} \left|
\frac{\partial u}{\partial n}(x')
\right|^p n_1(x') \, \dS \no \\
&= -\int\limits_{\partial B_{R_1}(0) \cap \partial\mathcal{O}^+_{e_1}}  \left|
\frac{\partial {u_{e_1}}}{\partial n}(x)
\right|^p  n_1(x) \, \dS.
\end{align}
Thus, by combining \eqref{form2}, \eqref{split1} and \eqref{split2} we  get
\begin{equation}\label{Formnew1}
 \la_1'(s)=(p-1)\int\limits_{\partial B_{R_1}(0) \cap \partial\mathcal{O}^+_{e_1}} \left (\left|
\frac{\partial u}{\partial n}
\right|^p- \left|
\frac{\partial u_{e_1}}{\partial n}
\right|^p \right) n_1 \, \dS.
\end{equation}
Similarly we can rewrite  formula \eqref{form1}  as below:
\begin{equation}\label{Formnew2}
\la_1'(s)=-(p-1)\int\limits_{\partial B_{R_0}(se_1) \cap \partial\mathcal{\widetilde{O}}^+_{e_1}} \left (\left|
\frac{\partial u}{\partial n}
\right|^p- \left|
\frac{\partial {\widetilde{u}_{e_1}}}{\partial n}
\right|^p \right) n_1 \, \dS.
\end{equation}

\subsection*{Auxiliary results}
Next we state a few results that we require in the subsequent sections. 
First we recall some results about the regularity of eigenfunctions of \eqref{evp} (cf. Theorem 1.3 of \cite{Barles}).
\begin{proposition}\label{regularity}
	Let $\Om$ be a smooth domain in $\R^N$ and let $u$ be a first eigenfunction of \eqref{evp}. 
	Then the following assertions are satisfied.
	\begin{enumerate}
		\item[\normalfont{(i)}] $u\in \C^1(\overline{\Om}).$
		\item[\normalfont{(ii)}] $u\in \C^2(\overline{\Om_\de}),$ where $\Om_\de :=\{x \in \Omega:\, \mathrm{dist}(x,\partial \Omega) < \de\}$ and $|\nabla u| > m > 0$ in $\Om_\de$ for some $m$.
	\end{enumerate}
\end{proposition}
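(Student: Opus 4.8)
The plan is to combine the standard regularity theory for the degenerate quasilinear operator $\Dep$ with a Hopf-type boundary point lemma, exploiting that a \emph{first} eigenfunction does not change sign. Throughout I may assume $u>0$ in $\Om$: since the Rayleigh quotient defining $\la_1(\Om)$ is even and $p$-homogeneous, $|u|$ is again a minimizer, so up to sign $u$ is nonnegative, and the strong maximum principle for $\Dep$ (Vázquez) forces $u>0$ in the interior. A Moser-type iteration applied to the weak formulation, using that the datum $\la_1|u|^{p-2}u$ has subcritical growth, first yields $u\in L^\infty(\Om)$; hence $f:=\la_1|u|^{p-2}u\in L^\infty(\Om)$.

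For part (i) I would establish $\C^{1,\al}$ regularity in two stages. Interior: with $f\in L^\infty_{loc}$, the interior gradient H\"older estimates of DiBenedetto and Tolksdorf for equations of $p$-Laplacian type give $u\in\C^{1,\al}_{loc}(\Om)$. Boundary: since $\partial\Om$ is smooth and $u=0$ on $\partial\Om$, Lieberman's boundary estimates up to $\partial\Om$ upgrade this to $u\in\C^{1,\al}(\overline{\Om})$, which in particular gives $u\in\C^1(\overline{\Om})$ and proves (i). Consequently $\Gr u$ is continuous on the compact set $\overline{\Om}$.

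For part (ii) the crucial point is to make the equation nondegenerate near $\partial\Om$. Because $u>0$ in $\Om$, $u=0$ on the smooth boundary, and $u$ solves $-\Dep u=f\ge 0$ with $f$ bounded, Hopf's boundary point lemma for $\Dep$ gives $\frac{\pa u}{\pa n}<0$ at every boundary point (outer normal $n$), so $|\Gr u|>0$ on $\partial\Om$. By continuity of $\Gr u$ and compactness of $\partial\Om$ there exist $\de>0$ and $m>0$ with $|\Gr u|>m$ throughout $\Om_\de$. On this region the coefficient field $|\Gr u|^{p-2}\big(I+(p-2)\,\tfrac{\Gr u\otimes\Gr u}{|\Gr u|^2}\big)$ is smooth in $\Gr u$ and uniformly elliptic, its eigenvalues lying between positive constants determined by $m$ and $\max_{\overline{\Om}}|\Gr u|$, so $-\Dep u=f$ reads there as a uniformly elliptic quasilinear equation. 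Since $u\in\C^1$ already makes $f=\la_1 u^{p-1}$ H\"older continuous on $\overline{\Om_\de}$, a single Schauder estimate for nondegenerate quasilinear equations yields $u\in\C^{2,\al}(\overline{\Om_\de})$, hence $u\in\C^2(\overline{\Om_\de})$, which together with the lower bound $|\Gr u|>m$ proves (ii).

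The main obstacle, and the only place where the sign of a first eigenfunction is used, is crossing the degeneracy of $\Dep$: the operator loses ellipticity exactly where $\Gr u=0$, so $\C^2$ regularity cannot be expected globally, and part (ii) hinges on confining the analysis to the collar $\Om_\de$ where Hopf's lemma guarantees $\Gr u\neq 0$. Verifying the hypotheses of the Hopf-type lemma for the degenerate operator (constructing a suitable comparison barrier near $\partial\Om$) and making the resulting gradient bound uniform along $\partial\Om$ is the technically delicate step; everything afterward is the classical Schauder theory for uniformly elliptic equations.
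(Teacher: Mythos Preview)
Your argument is correct and follows the standard route to such regularity results: boundedness by Moser iteration, $\C^{1,\alpha}$ up to the boundary via DiBenedetto--Tolksdorf and Lieberman, Hopf's lemma to secure $|\nabla u|>0$ on $\partial\Om$, and then Schauder theory on the nondegenerate collar. There is nothing to compare against, however: the paper does not supply its own proof of this proposition but simply records it as a known fact with a reference (Theorem~1.3 of \cite{Barles}). Your sketch is precisely the kind of argument underlying that citation, so in effect you have unpacked what the paper takes for granted.
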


The following version of the strong maximum principle is  due to Vazquez (Section 4, \cite{vazquez1984}).   
\begin{proposition}\label{smp}
 Let $\Om$ be a domain in $\R^N.$  Let $w \in \C^1(\overline{\Om})$ be a positive function satisfying 
\begin{equation*}
 -{\rm div}\left(a_{ij}(x) \frac{\partial w}{\partial x_j} \right)  \ge 0  \mbox{ in } \Om,
\end{equation*} 
where $a_{ij}\in W^{1,\infty}_{loc}(\Om)$ and there exists $\al>0$ such that $a_{ij}(x) \xi_i \xi_j \ge \al |\xi|^2, \,\forall\, \xi \in \R^N\setminus\{0\}, \,\forall \, x\in \Om. $ Then 
\begin{enumerate}
   \item[\normalfont{(i)}] $w\equiv 0$ in $\Om$ or else $w>0$ in $\Om.$
   \item[\normalfont{(ii)}] Let $x_0$ be a point on $\partial \Om $ satisfying the interior sphere condition. If $w>0$ in $\Om$ and $w(x_0)=0$, then 
   $$\frac{\partial w}{\partial n}(x_0) <0,$$
   where $n$ is the outward unit normal to $\partial \Omega$ at $x_0.$
\end{enumerate}
\end{proposition}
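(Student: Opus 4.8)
The plan is to read the hypothesis ``$w$ positive'' as $w\ge 0$ (which is exactly what part~(i) clarifies) and to treat $w$ as a weak nonnegative supersolution of the uniformly elliptic divergence-form operator
\[
Lw := -\pa_i\!\left(a_{ij}(x)\,\pa_j w\right),
\]
so that the hypothesis $Lw\ge 0$ means $\int_\Om a_{ij}\,\pa_j w\,\pa_i\varphi\,\dx\ge 0$ for every nonnegative $\varphi\in\C_c^\infty(\Om)$. Since the $a_{ij}\in W^{1,\infty}_{loc}(\Om)$ are bounded and satisfy $a_{ij}(x)\xi_i\xi_j\ge\al|\xi|^2$, both conclusions can be extracted from the De Giorgi--Nash--Moser theory together with an explicit Hopf barrier, with every comparison carried out in the weak (integral) sense because $w$ is only $\C^1$.

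For part~(i) I would invoke the weak Harnack inequality for nonnegative supersolutions of such operators: on every ball with $B_{2r}(x_0)\subset\subset\Om$ one has
\[
\left(\frac{1}{|B_r(x_0)|}\int_{B_r(x_0)} w^q\,\dx\right)^{1/q}\le C\,\inf_{B_r(x_0)} w
\]
for some $q>0$ and $C$ depending only on $N$, $\al$ and the sup-norm of the coefficients. Suppose $w\not\equiv 0$ yet $w(x_*)=0$ for some $x_*\in\Om$. Taking $x_0=x_*$ with $r$ small gives $\inf_{B_r(x_*)}w=0$, hence $w\equiv 0$ a.e. on $B_r(x_*)$, and by continuity $w\equiv 0$ on $B_r(x_*)$. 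Thus $\{w=0\}$ is open; it is also closed by continuity of $w$; since $\Om$ is connected this forces $w\equiv 0$, a contradiction. Therefore either $w\equiv 0$ or $w>0$ throughout $\Om$.

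For part~(ii) I would use the classical Hopf barrier. Let $B_\rho(y)\subset\Om$ be the interior ball touching $\pa\Om$ at $x_0$, so $\overline{B_\rho(y)}\cap\pa\Om=\{x_0\}$ and the outward normal $n$ at $x_0$ equals $(x_0-y)/\rho$. On the annulus $A:=B_\rho(y)\setminus\overline{B_{\rho/2}(y)}$ set
\[
h(x)=e^{-\ga|x-y|^2}-e^{-\ga\rho^2},
\]
so that $h>0$ on $\pa B_{\rho/2}(y)$, $h=0$ on $\pa B_\rho(y)$, and $\frac{\pa h}{\pa n}(x_0)<0$. Rewriting $L$ in nondivergence form (licit a.e.\ since $a_{ij}$ is Lipschitz, whence $\pa_i a_{ij}\in L^\infty_{loc}$) gives
\[
Lh=e^{-\ga|x-y|^2}\Big(2\ga\,a_{ii}-4\ga^2\,a_{ij}(x_i-y_i)(x_j-y_j)+2\ga\,(\pa_i a_{ij})(x_j-y_j)\Big).
\]
On $A$ one has $a_{ij}(x_i-y_i)(x_j-y_j)\ge\al|x-y|^2\ge\al(\rho/2)^2>0$ while the remaining terms are $O(\ga)$, so $Lh\le 0$ on $A$ for $\ga$ large. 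Since $w>0$ on the compact set $\pa B_{\rho/2}(y)$, I pick $\eps>0$ with $\eps h\le w$ there; as $\eps h=0\le w$ on $\pa B_\rho(y)$, the function $w-\eps h$ is a weak supersolution that is nonnegative on $\pa A$, so the weak minimum principle yields $w\ge\eps h$ on $A$. Both functions are $\C^1$ up to $x_0$ and agree there, so $x_0$ minimizes $w-\eps h$ over $\overline A$; differentiating along $n$ gives $\frac{\pa w}{\pa n}(x_0)\le\eps\,\frac{\pa h}{\pa n}(x_0)<0$.

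The main obstacle, and the reason one cannot simply quote the textbook $\C^2$ maximum principle, is precisely the low regularity: $w$ is only $\C^1$ and the equation holds only weakly, so both the strong maximum principle and the boundary comparison must be routed through weak formulations. Concretely, the delicate points are (a)~justifying the weak Harnack inequality under merely bounded measurable coefficients in order to run the open--closed argument of part~(i), and (b)~replacing the pointwise comparison in the Hopf step by the weak minimum principle applied to $w-\eps h$, after verifying that the nondivergence rewriting of $Lh$ is valid almost everywhere thanks to $a_{ij}\in W^{1,\infty}_{loc}$. Once these are secured, extracting the strict sign of the normal derivative is immediate from the $\C^1$ regularity of $w$ together with the smoothness of the barrier.
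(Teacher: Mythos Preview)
The paper does not actually prove this proposition: it is stated as a quotation of the strong maximum principle and Hopf lemma from Vazquez (Section~4 of \cite{vazquez1984}), with no argument given. Your proposal supplies the standard proof---the weak Harnack inequality for part~(i) and the exponential Hopf barrier for part~(ii), with comparisons carried out in the weak sense to accommodate $w\in\C^1$ and $a_{ij}\in W^{1,\infty}_{loc}$---which is precisely the kind of argument one finds in the cited reference, so there is nothing to contrast.
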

In the next proposition we state a weak comparison result, see Theorem~2.1 and Proposition~4.1 of \cite{chorwadwala2015faber}.
\begin{proposition}\label{wcp}
Let $\Om$ be a domain in $\R^N$  with Lipschitz boundary. Let $u_1,u_2\in \C^1(\overline{\Om})$ be positive weak solutions of $-\De_p u = \la u^{p-1} \mbox{ in } \Om$. If  $u_1\ge u_2$ on $\partial \Om,$ then 
 $$
 u_1\ge u_2 \mbox{ in } \Om \mbox{ and }  \frac{\partial u_1}{\partial n} \le \frac{\partial u_2}{\partial n} \mbox{ on } \{x \in \partial \Om:\,u_1(x)=u_2(x)=0\}.
 $$
\end{proposition}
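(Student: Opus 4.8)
The plan is to prove the pointwise comparison $u_1\ge u_2$ first, and then read off the inequality of normal derivatives as an elementary consequence. The essential difficulty is that the reaction term $\la u^{p-1}$ is \emph{increasing} in $u$, so the naive test fails: testing the two weak formulations against $(u_2-u_1)^+$ and subtracting produces, on the set $\{u_2>u_1\}$, two \emph{nonnegative} quantities (the elliptic part by monotonicity of $\xi\mapsto|\xi|^{p-2}\xi$, the reaction part because $t\mapsto t^{p-1}$ is increasing), which yields no information. To bypass this I would use the Picone / D\'iaz--Saa test functions, which are tailored to the homogeneity of the eigenvalue equation and exploit a hidden convexity rather than a sign.

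Concretely, suppose for contradiction that $\Om^+:=\{x\in\Om:\,u_2(x)>u_1(x)\}$ is nonempty. Since $u_1\ge u_2$ on $\partial\Om$ and both functions are continuous on $\overline{\Om}$, we have $u_1=u_2$ on all of $\partial\Om^+$: on the interior part this is the level set $\{u_1=u_2\}$, and on $\partial\Om^+\cap\partial\Om$ it follows from the boundary hypothesis. I would test the weak formulation of $u_1$ against $\psi_1:=(u_2^p-u_1^p)^+/u_1^{p-1}$ and that of $u_2$ against $\psi_2:=(u_2^p-u_1^p)^+/u_2^{p-1}$; both are nonnegative, supported in $\Om^+$, and vanish on $\partial\Om^+$. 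Because $u_1^{p-1}$ and $u_2^{p-1}$ sit in the denominators, the delicate point --- and what I expect to be the main obstacle --- is to verify that $\psi_i\in W_0^{1,p}(\Om)$, i.e.\ that the quotients and their gradients remain integrable where $u_1,u_2\to0$. This is exactly where the $\C^1(\overline{\Om})$ regularity enters: the functions can vanish only on $\partial\Om$, and at a common boundary zero $x_0$ (necessarily common, since $0\le u_2\le u_1$ at $x_0$ forces both to vanish) the Hopf-type estimate of Proposition~\ref{smp}(ii) gives $u_i\simeq\mathrm{dist}(\cdot,\partial\Om)$, so that $u_2^p/u_1^{p-1}\simeq\mathrm{dist}(\cdot,\partial\Om)\to0$ and a direct computation shows $\Gr\psi_i$ stays bounded; away from such zeros $u_1$ is bounded below and there is nothing to check.

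Subtracting the two tested identities and using that $u_1^{p-1}\psi_1=u_2^{p-1}\psi_2=(u_2^p-u_1^p)^+$ makes the reaction integrals cancel, so that
\[
\int_{\Om^+}\big(|\Gr u_1|^{p-2}\inpr{\Gr u_1,\Gr\psi_1}-|\Gr u_2|^{p-2}\inpr{\Gr u_2,\Gr\psi_2}\big)\dx=0.
\]
The algebraic Picone inequality (equivalently, the hidden convexity of $w\mapsto\int|\Gr w^{1/p}|^p$, i.e.\ the D\'iaz--Saa inequality) guarantees that the integrand above is pointwise of one fixed sign, vanishing at a point only when $\Gr(u_1/u_2)=0$ there. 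Hence the integrand vanishes a.e.\ on $\Om^+$, which forces $u_1=c\,u_2$ on each connected component of $\Om^+$. Matching this with $u_1=u_2$ on the interior part of $\partial\Om^+$, where both functions are positive, gives $c=1$, so $u_1\equiv u_2$ on $\Om^+$, contradicting $u_2>u_1$ there. (The borderline case $\Om^+=\Om$, i.e.\ $u_1=u_2$ on $\partial\Om$, is handled by the same inequality applied directly on $\Om$.) Therefore $\Om^+=\varnothing$, that is, $u_1\ge u_2$ in $\Om$.

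Finally, the inequality of normal derivatives is immediate. Fix $x_0\in\partial\Om$ with $u_1(x_0)=u_2(x_0)=0$ and set $w:=u_1-u_2\in\C^1(\overline{\Om})$. By the comparison just proved, $w\ge0$ in $\Om$ and $w(x_0)=0$, so $w$ attains its minimum over $\overline{\Om}$ at $x_0$. Moving into the interior along $-n$ we get $\tfrac{w(x_0-tn)-w(x_0)}{t}\ge0$ for small $t>0$, and letting $t\to0^+$ yields $\inpr{\Gr w(x_0),n}\le0$, i.e.\ $\frac{\pa u_1}{\pa n}(x_0)\le\frac{\pa u_2}{\pa n}(x_0)$, as claimed; were a strict inequality ever needed, one would instead apply the Hopf part of Proposition~\ref{smp}(ii) to $w$.
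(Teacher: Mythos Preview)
The paper does not actually prove this proposition; it is quoted from \cite{chorwadwala2015faber} (Theorem~2.1 and Proposition~4.1 there), so there is no in-paper argument to compare against. Your route via the D\'iaz--Saa/Picone test functions $\psi_i=(u_2^p-u_1^p)^+/u_i^{p-1}$ is the standard one for equations with this homogeneity, and the main steps --- admissibility of $\psi_i$ near $\partial\Om$ using the $\C^1$ regularity and a Hopf estimate, cancellation of the reaction integrals, Picone's pointwise inequality for the sign of the remaining integrand, and the elementary derivation of the normal-derivative inequality from $u_1\ge u_2$ --- are all sound.

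There is, however, a genuine gap in the ``borderline case $\Om^+=\Om$'' that you dispatch in one parenthetical line. When $u_2>u_1$ throughout $\Om$ and $u_1=u_2$ on $\partial\Om$, your argument only yields $u_1=c\,u_2$ on $\Om$ for some constant $c\in(0,1)$; to force $c=1$ you need a boundary point where the common trace is \emph{positive}, and none may exist. Indeed the proposition \emph{as stated} is false in this regime: with $\la=\la_1(\Om)$ and $\varphi_1>0$ the first Dirichlet eigenfunction, the pair $u_1=\varphi_1$, $u_2=2\varphi_1$ satisfies all the hypotheses (by the $(p{-}1)$-homogeneity of $\De_p$ both solve $-\De_p u=\la u^{p-1}$, and $u_1=u_2=0$ on $\partial\Om$ gives $u_1\ge u_2$ there), yet $u_1<u_2$ in $\Om$. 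The statement therefore needs an extra hypothesis such as $\la<\la_1(\Om)$, or that $u_1$ and $u_2$ do not both vanish identically on $\partial\Om$. This is harmless for the paper's purposes: in every application the comparison is made on a \emph{proper} subdomain of $\Om_s$, whose first eigenvalue strictly exceeds $\la_1(s)$, and the two functions agree with positive values along the reflecting hyperplane. Under either such hypothesis your argument is complete; without one, no argument can succeed.
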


 \section{Main result}\label{section:radial}
In this section we give the proof of Theorem \ref{MainTHM}. 
We will be considering various annular regions apart from $\Om_s$, for simplicity we denote them as
$$
A_{r_1,r_0}(x,y)=B_{r_1}(x)\setminus \overline{B_{r_0}(y)}.
$$
In particular, $A_{R_1, R_0}(0, s e_1)=\Om_s.$ Throughout this section, unless otherwise specified, the eigenfunction $u_s$ is  the first eigenfunction of $-\De_p$ on  $\Om_s$ normalized as in \eqref{normalization}, namely 
$u_s>0 \mbox{ and } \norm{u_s}_p=1.$

The following result is proved in \cite{Anisa2} (see Theorem 3.1) using  formula \eqref{Formnew2}. Here, for the sake of completeness, we  present a proof by making use of  formula \eqref{Formnew1}. 
\begin{lemma}
 Let $s\in [0, R_1-R_0)$ and let $\la_1(s)$ be the first eigenvalue of $-\De_p$ on  $\Om_s$. Then $\la'(s)\le 0.$
\end{lemma}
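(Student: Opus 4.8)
The plan is to use formula \eqref{Formnew1} together with the weak comparison principle (Proposition~\ref{wcp}) applied to $u_s$ and its reflection about the hyperplane $H_{e_1}$ on the appropriate half of $\Om_s$. First I would observe that, by the reflection relations in \eqref{reflectionlist}, when $s > 0$ we have $\sigma_{e_1}(\mathcal{O}^+_{e_1}) \subset \mathcal{O}^-_{e_1}$, so $\sigma_{e_1}$ maps $\mathcal{O}^+_{e_1}$ properly into $\Om_s$; consequently the reflected function $u_{e_1}(x) = u_s(\sigma_{e_1}(x))$ is well defined on $\overline{\mathcal{O}^+_{e_1}}$, belongs to $\C^1$ there (by Proposition~\ref{regularity}(i)), and is again a positive weak solution of $-\Delta_p w = \la_1(s) w^{p-1}$ on $\mathcal{O}^+_{e_1}$, since the $p$-Laplacian is invariant under the orthogonal reflection $\sigma_{e_1}$. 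When $s = 0$ the domain is symmetric, $\sigma_{e_1}(\mathcal{O}^+_{e_1}) = \mathcal{O}^-_{e_1}$, and uniqueness of the first eigenfunction forces $u_s = u_{e_1}$, whence $\la_1'(0) = 0$ directly from \eqref{Formnew1}.

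Next I would set up the boundary comparison on $\Om' := \mathcal{O}^+_{e_1}$. Its boundary splits into three pieces: the portion of the outer sphere $\partial B_{R_1}(0) \cap \partial \mathcal{O}^+_{e_1}$, the portion of the inner sphere $\partial B_{R_0}(se_1) \cap \partial \mathcal{O}^+_{e_1}$, and the flat part lying on the hyperplane $H_{e_1}$. On the flat part $\sigma_{e_1}$ is the identity, so $u_{e_1} = u_s$ there. On the outer-sphere part, $\sigma_{e_1}$ maps it onto another piece of the same sphere $\partial B_{R_1}(0)$ where $u_s = 0$, so $u_{e_1} = 0 = u_s$. On the inner-sphere part, $\sigma_{e_1}$ sends $\partial B_{R_0}(se_1) \cap \partial \mathcal{O}^+_{e_1}$ into the interior of $\Om_s$ (third line of \eqref{reflectionlist}), where $u_s > 0$, so $u_{e_1} > 0 = u_s$ there. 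Hence $u_{e_1} \ge u_s$ on all of $\partial \Om'$, and Proposition~\ref{wcp} gives $u_{e_1} \ge u_s$ in $\Om'$ together with $\frac{\partial u_{e_1}}{\partial n} \le \frac{\partial u_s}{\partial n}$ on the set where both vanish, which includes the outer-sphere part $\partial B_{R_1}(0) \cap \partial \mathcal{O}^+_{e_1}$.

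On that outer-sphere part, $n$ is the outward normal to $B_{R_1}(0)$, so $\frac{\partial u_s}{\partial n} \le 0$ and likewise $\frac{\partial u_{e_1}}{\partial n} \le 0$ (the latter because $u_{e_1} \ge u_s \ge 0$ in $\Om'$ and $u_{e_1}$ vanishes there, or simply from the inequality just obtained); thus $0 \ge \frac{\partial u_{e_1}}{\partial n} \ge \frac{\partial u_s}{\partial n}$... wait, I have it the other way: the comparison yields $\frac{\partial u_{e_1}}{\partial n} \le \frac{\partial u_s}{\partial n} \le 0$, so $\left|\frac{\partial u_{e_1}}{\partial n}\right| \ge \left|\frac{\partial u_s}{\partial n}\right|$, i.e. $\left|\frac{\partial u}{\partial n}\right|^p - \left|\frac{\partial u_{e_1}}{\partial n}\right|^p \le 0$. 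Finally, on $\partial B_{R_1}(0) \cap \partial \mathcal{O}^+_{e_1}$ the first normal component $n_1 = \frac{x_1}{R_1} > 0$ since $x \in \mathcal{H}_{e_1}^+$. Plugging these two sign facts into \eqref{Formnew1} shows the integrand is $\le 0$, hence $\la_1'(s) \le 0$, and together with the case $s=0$ this proves the lemma. The only delicate point is verifying the hypotheses of Proposition~\ref{wcp} — that $u_{e_1}$ is genuinely a $\C^1(\overline{\Om'})$ positive weak solution on $\Om'$ and that $\Om'$ has Lipschitz boundary — but both follow from Proposition~\ref{regularity}(i) and the invariance of $-\Delta_p$ under reflection, so I expect no real obstacle here; the main content is simply bookkeeping the reflection relations \eqref{reflectionlist} correctly.
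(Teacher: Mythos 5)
Your proposal is correct and follows essentially the same route as the paper: both use formula \eqref{Formnew1}, compare $u_s$ with its reflection $u_{e_1}$ on $\mathcal{O}^+_{e_1}$ via the weak comparison principle (Proposition~\ref{wcp}), and conclude from $\frac{\partial u_{e_1}}{\partial n}\le \frac{\partial u_s}{\partial n}\le 0$ and $n_1>0$ on $\partial B_{R_1}(0)\cap\partial\mathcal{O}^+_{e_1}$ that the integrand is nonpositive. The only cosmetic difference is that the paper also invokes Proposition~\ref{smp} to get the strict sign $\frac{\partial u_s}{\partial n}<0$, which is not needed for the weak inequality $\la_1'(s)\le 0$.
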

\begin{proof}
 By setting $u=u_s$ and  noting  that $ \sigma_{e_1}(\mathcal{O}^+_{e_1}) \subset \mathcal{O}^-_{e_1}$ and $\sigma_{e_1}(\partial B_{R_0}(se_1) \cap \partial\mathcal{O}^+_{e_1})\subset\mathcal{O}^-_{e_1} $, we easily see that  $u_{e_1}$ and $u$  weakly satisfy the following problems:
\begin{equation*}
\begin{aligned}
-\Delta_p u_{e_1} &= \lambda_1(s) \, u_{e_1}^{p-1}, \\
u_{e_1} &= 0, \\
u_{e_1} &= {u},\\
u_{e_1} &>0,
\end{aligned}
 \quad 
\begin{aligned}
-\Delta_p u &= \lambda_1(s) \, u^{p-1} &&\text{ in }\mathcal{O}^+_{e_1}, \\
u &= 0 &&\text{ on } \partial B_{R_1}(0) \cap \partial\mathcal{O}^+_{e_1}, \\
u &= {u_{e_1}} &&\text{ on } H_{e_1} \cap \partial\mathcal{O}^+_{e_1},\\
u &=0 &&\text{ on } \partial B_{R_0}(se_1) \cap \pa\mathcal{O}^+_{e_1}.
\end{aligned}
\end{equation*}
Thus by applying the weak comparison principle (Proposition \ref{wcp}) we obtain ${u}_{e_1} \geq u$ in $\mathcal{O}^+_{e_1}$. Moreover, as $u = 0$ on $\partial B_{R_1}(0) \cap \partial 
\mathcal{O}^+_{e_1},$  Proposition \ref{smp} yields
\begin{equation}\label{Hopf}
\frac{\partial {u}_{e_1}}{\pa n} \leq 
\frac{\partial u}{\pa n} < 0
\text{ on } \partial B_{R_1}(0) \cap \partial\mathcal{O}^+_{e_1}.
\end{equation}
Now since $n_1(x)$  is positive  for $x \in \partial B_{R_1}(0) \cap \partial\mathcal{O}^+_{e_1}$, from \eqref{Formnew1} and \eqref{Hopf} we derive that 
 \begin{equation*}\label{leq0}
  \lambda_1'(s)=(p-1)\int\limits_{\partial B_{R_1}(0) \cap \partial\mathcal{O}^+_{e_1}} \left (\left|
\frac{\partial u}{\pa n}
\right|^p- \left|
\frac{\partial u_{e_1}}{\pa n}
\right|^p \right) n_1 \, \dS\;\leq 0.
 \end{equation*}
This completes the proof.
\end{proof}

\subsection*{Symmetries with respect to the hyperplanes}
First we study symmetries of the first eigenfunction of $-\De_p$ on $\Om_s$.  We show that for  $s\in (0, R_1-R_0)$ the associated first eigenfunction is symmetric with respect to the  hyperplanes perpendicular to $H_{e_1}.$

\begin{lemma}\label{symmetry_nonvert}
Let $s\in (0, R_1-R_0)$ and let $u_s$ be the first eigenfunction of $-\De_p$ on $\Om_s.$ If  $a\in \R^N\setminus \{0\}$ with $\inpr{a,e_1}=0,$ then  
$$
u_s(x)=u_s(\sigma_{a}(x)), \, \forall x \in \Om_s.
$$
In particular, for $i=2,3,\dots,N$
$$
u_s(x)=u_s(\sigma_{e_i}(x))=u_s(x_1,x_2,\dots,x_{i-1},-x_i,x_{i+1},\dots,x_N), \, \forall x\in \Om_s.
$$
\end{lemma}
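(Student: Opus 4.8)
The plan is to use that the domain $\Om_s$ is itself invariant under $\sigma_a$ whenever $\inpr{a,e_1}=0$. Indeed, such a hyperplane $H_a$ passes through the origin and through the point $se_1$, so $\sigma_a$ maps each of the balls $B_{R_1}(0)$ and $B_{R_0}(se_1)$ onto itself; hence $\sigma_a(\Om_s)=\Om_s$, and consequently $\sigma_a\big(\partial\Om_s\cap\partial\mathcal{O}^+_a\big)=\partial\Om_s\cap\partial\mathcal{O}^-_a$. Since the $p$-Laplacian is invariant under the orthogonal (and measure-preserving) map $\sigma_a$, the reflected function $u_a:=u_s\circ\sigma_a$ is again a positive first eigenfunction of $-\De_p$ on $\Om_s$ normalized as in \eqref{normalization}. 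One short route to the conclusion is to invoke the simplicity of $\la_1(\Om_s)$, which immediately forces $u_a=u_s$; to stay within the tools already introduced, however, I would instead argue by reflection together with the weak comparison principle, in the spirit of the preceding lemma.

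So first I would pass to the half-domain $\mathcal{O}^+_a=\Om_s\cap\mathcal{H}_a^+$. By Proposition~\ref{regularity}(i) and the identity \eqref{gradient}, both $u:=u_s|_{\overline{\mathcal{O}^+_a}}$ and $u_a|_{\overline{\mathcal{O}^+_a}}$ lie in $\C^1(\overline{\mathcal{O}^+_a})$, are positive in $\mathcal{O}^+_a$, and satisfy $-\De_p w=\la_1(s)\,w^{p-1}$ weakly in $\mathcal{O}^+_a$ (for $u_a$ this uses $\sigma_a(\mathcal{O}^+_a)=\mathcal{O}^-_a$ together with the invariance of $\De_p$). The crucial point is that $u$ and $u_a$ carry the \emph{same} Dirichlet data on $\partial\mathcal{O}^+_a$: on the spherical part $\partial\Om_s\cap\partial\mathcal{O}^+_a$ both vanish --- for $u_a$ because $\sigma_a$ sends this set into $\partial\Om_s$, where $u_s=0$ --- while on $H_a\cap\partial\mathcal{O}^+_a$ the reflection $\sigma_a$ is the identity, so $u_a=u$ there. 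Applying Proposition~\ref{wcp} on $\mathcal{O}^+_a$ (which is connected with Lipschitz boundary) twice, with the roles of the two solutions interchanged, gives $u_a\le u$ and $u\le u_a$ in $\mathcal{O}^+_a$, hence $u=u_a$ in $\mathcal{O}^+_a$.

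It then remains to pass from $\mathcal{O}^+_a$ to all of $\Om_s$. For $x\in\mathcal{O}^-_a$ set $y:=\sigma_a(x)\in\mathcal{O}^+_a$; the equality just obtained yields $u_s\big(\sigma_a(y)\big)=u_s(y)$, that is, $u_s(x)=u_s\big(\sigma_a(x)\big)$; on $H_a\cap\Om_s$ the identity is trivial. Hence $u_s=u_s\circ\sigma_a$ on all of $\Om_s$. The ``in particular'' assertion follows by specializing to $a=e_i$, $i=2,\dots,N$, for which $\inpr{e_i,e_1}=0$ and $\sigma_{e_i}(x)=(x_1,\dots,x_{i-1},-x_i,x_{i+1},\dots,x_N)$.

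The one step calling for genuine care is the application of Proposition~\ref{wcp}: one must check that $\mathcal{O}^+_a$ has Lipschitz boundary --- which holds because $H_a$ meets each of the two spheres transversally, in fact orthogonally, passing through their centres, so no cusps arise --- and that $u$ and $u_a$ are honest positive $\C^1$-solutions up to $\overline{\mathcal{O}^+_a}$ with coinciding traces. It is precisely here that $\inpr{a,e_1}=0$ enters essentially: for $\inpr{a,e_1}\ne 0$ the domain $\Om_s$ is not $\sigma_a$-invariant, $u_a$ is no longer an eigenfunction on $\Om_s$, and the argument genuinely breaks down --- consistently with the fact that $u_s$ is in general not symmetric across such hyperplanes.
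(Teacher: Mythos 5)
Your proposal is correct and follows essentially the same route as the paper: restrict $u_s$ and $u_a=u_s\circ\sigma_a$ to the half-domain $\mathcal{O}^+_a$, observe that both are positive eigenfunctions with identical boundary data (zero on the spherical parts, equal on $H_a\cap\partial\mathcal{O}^+_a$), and apply the weak comparison principle (Proposition~\ref{wcp}) in both directions to get $u_a\equiv u$ there, then reflect. Your additional checks (Lipschitz regularity of $\mathcal{O}^+_a$, the role of $\inpr{a,e_1}=0$ in making $\Om_s$ invariant under $\sigma_a$) and the remark that simplicity of $\la_1$ would also suffice are all sound.
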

\begin{proof}
Clearly for $a \neq 0$ with $\inpr{a,e_1}=0$,  $\mathcal{O}^+_{a}=\sigma_{a}(\mathcal{O}^-_{a})$ (see \eqref{reflectionlist}). Thus $u:=u_s$ and $u_a:= u_s \circ \sigma_a$ weakly satisfy the following problems, respectively:
\begin{equation*}\label{eq2}
\begin{aligned}
-\Delta_p u_a &= \lambda_1(s) \, u_a^{p-1}, \\
u_a &= u,
\end{aligned}
\quad 
\begin{aligned}
-\Delta_p u&=\la_1(s)\,u^{p-1}  &&\text{ in }\mathcal{O}^+_a,\\
u &= u_a \qquad &&\text{ on } \partial\mathcal{O}^+_a.
\end{aligned}
\end{equation*}
Now by the weak comparison principle (Proposition \ref{wcp}), we obtain that ${u}_a \equiv u$ in $\mathcal{O}^+_{a}$, which implies the desired assertions. 
\end{proof}

In the next lemma we show that $u_s$ is  symmetric also with respect to $H_{e_1}$  in  a neighbourhood of the outer boundary, provided  $\la_1'(s)=0.$  
\begin{lemma}\label{symmetry_vert}
  If  $\lambda_1'(s)=0$ for some $s\in(0, R_1-R_0)$, then there exists $r_1>0$ such that 
 $$
 u_s(x)=u_s(\sigma_{e_1}(x)),\, \forall x\in  A_{R_1, r_1}(0, 0).
 $$
\end{lemma}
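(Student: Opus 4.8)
The plan is to exploit the equality $\lambda_1'(s)=0$ in formula \eqref{Formnew1} together with the one-sided inequality \eqref{Hopf} established in the preceding lemma. Since the integrand $\left(\left|\frac{\partial u}{\partial n}\right|^p-\left|\frac{\partial u_{e_1}}{\partial n}\right|^p\right)n_1$ is nonpositive on $\partial B_{R_1}(0)\cap\partial\mathcal O^+_{e_1}$ (the first factor by \eqref{Hopf}, and $n_1>0$ there), vanishing of the integral forces
$$
\left|\frac{\partial u}{\partial n}(x)\right| = \left|\frac{\partial u_{e_1}}{\partial n}(x)\right|
\quad\text{on}\quad \partial B_{R_1}(0)\cap\partial\mathcal O^+_{e_1},
$$
and combined with \eqref{Hopf} this upgrades to $\frac{\partial u}{\partial n}=\frac{\partial u_{e_1}}{\partial n}$ on that set. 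Thus the functions $u$ and $u_{e_1}$ agree to first order on the portion of the outer sphere lying in $\mathcal H^+_{e_1}$: both vanish there (Dirichlet data) and have the same normal derivative.

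Next I would promote this boundary coincidence to an interior coincidence near the outer boundary. Recall $u$ and $u_{e_1}$ both solve $-\Delta_p w=\lambda_1(s)w^{p-1}$ in $\mathcal O^+_{e_1}$, with $u_{e_1}\ge u$ there (from the weak comparison step in the previous lemma), and they coincide on $\partial B_{R_1}(0)\cap\partial\mathcal O^+_{e_1}$. The difference $v:=u_{e_1}-u\ge 0$ satisfies a linear uniformly elliptic equation in divergence form in the collar region $\Om_\delta\cap\mathcal O^+_{e_1}$ where $|\nabla u|>m>0$ (Proposition~\ref{regularity}(ii)): linearizing the $p$-Laplacian along the segment between $\nabla u$ and $\nabla u_{e_1}$ produces coefficients $a_{ij}\in W^{1,\infty}_{loc}$ with $a_{ij}\xi_i\xi_j\ge\alpha|\xi|^2$, and $v$ obeys $-\mathrm{div}(a_{ij}\partial_j v)=\lambda_1(s)(u_{e_1}^{p-1}-u^{p-1})\ge0$. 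Since $v\ge0$ and $v$ vanishes together with its normal derivative at every point of $\partial B_{R_1}(0)\cap\partial\mathcal O^+_{e_1}$, the Hopf boundary-point lemma (Proposition~\ref{smp}(ii)) rules out $v>0$ in a neighbourhood of any such boundary point; hence $v\equiv0$, i.e. $u=u_{e_1}$, in a one-sided neighbourhood $\mathcal U^+$ of $\partial B_{R_1}(0)\cap\partial\mathcal O^+_{e_1}$ inside $\mathcal O^+_{e_1}$. By the definition $u_{e_1}=u\circ\sigma_{e_1}$, this says $u_s(x)=u_s(\sigma_{e_1}(x))$ for all $x$ in $\mathcal U^+$.

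Finally I would assemble the symmetric annular neighbourhood. Applying $\sigma_{e_1}$ to the identity on $\mathcal U^+$ gives the same identity on $\sigma_{e_1}(\mathcal U^+)\subset\mathcal O^-_{e_1}$, a neighbourhood of the corresponding part of the outer sphere in the lower half-space; on $H_{e_1}$ the identity is trivial. Together these cover a full neighbourhood of $\partial B_{R_1}(0)$ in $\Om_s$, and since $\Om_s$ contains the full spherical shell $A_{R_1,r_1}(0,0)$ for $r_1$ close enough to $R_1$ (the inner ball $B_{R_0}(se_1)$ is compactly inside $B_{R_1}(0)$), we obtain $u_s(x)=u_s(\sigma_{e_1}(x))$ for all $x\in A_{R_1,r_1}(0,0)$ with an appropriate choice of $r_1<R_1$. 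The main obstacle is the middle step: one must be careful that the linearized operator is genuinely uniformly elliptic with locally Lipschitz coefficients, which is exactly why the nondegeneracy $|\nabla u|>m>0$ near $\partial\Om_s$ from Proposition~\ref{regularity}(ii) is invoked, and one must check that $v$ and its normal derivative vanish simultaneously so that Hopf's lemma applies in its contrapositive form rather than merely the interior strong maximum principle.
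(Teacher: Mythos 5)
Your overall strategy coincides with the paper's: linearize the $p$-Laplacian along the segment joining $\nabla u$ and $\nabla u_{e_1}$, apply the strong maximum principle to $w=u_{e_1}-u\ge 0$, and use the Hopf lemma together with formula \eqref{Formnew1} to exclude $w>0$. Whether you first extract the pointwise identity $\frac{\partial u}{\partial n}=\frac{\partial u_{e_1}}{\partial n}$ on the outer cap from $\lambda_1'(s)=0$ and then contradict Hopf, or --- as the paper does --- let Hopf produce a strict inequality of normal derivatives that forces $\lambda_1'(s)<0$, is immaterial.

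There is, however, a genuine gap in your middle step. You assert that the linearized operator is uniformly elliptic on the collar $\Om_\delta\cap\mathcal{O}^+_{e_1}$ ``where $|\nabla u|>m>0$''. Lower bounds on $|\nabla u|$ and on $|\nabla u_{e_1}|$ separately do not bound $|(1-t)\nabla u(x)+t\nabla u_{e_1}(x)|$ away from zero: the convex combination can vanish when the two gradients point in roughly opposite directions, and then the coefficients $a_{ij}$ degenerate (for $p>2$) or blow up (for $1<p<2$), and the $W^{1,\infty}_{loc}$ regularity required to invoke Proposition~\ref{smp} is lost. What rescues the argument is a condition that is \emph{convex} in the gradient: the paper establishes $\left<\nabla u(x),x\right><-\delta_r$ and $\left<\nabla u_{e_1}(x),x\right><-\delta_r$ on the relevant annulus, whence every convex combination obeys the same bound and so has norm at least $\delta_r/R_1$. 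Your collar version can be repaired the same way --- near $\partial B_{R_1}(0)$ both radial derivatives are uniformly negative by Hopf's lemma and compactness --- but that is the argument that must be made; the nondegeneracy $|\nabla u|>m$ alone is not the right hypothesis.

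A secondary point: even after this repair, your $r_1$ is merely ``some radius close to $R_1$'', whereas the paper defines $r_1$ by \eqref{eq:r1} as the supremum of radii at which $\left<\nabla u(x),x\right>$ vanishes and proves the symmetry on the whole maximal annulus $A_{R_1,r_1}(0,0)$. That maximality, and the existence of $x_1\in\partial B_{r_1}(0)$ with $\left<\nabla u(x_1),x_1\right>=0$, are exactly what Theorem~\ref{radial-outer}, Lemma~\ref{lem:r0=r1} and the proof of Theorem~\ref{MainTHM} consume later. So your argument, once the ellipticity is fixed, does prove the existential statement of the lemma, but it does not produce the specific $r_1$ on which the remainder of the paper relies.
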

\begin{proof}
We set $u=u_s$. Since  $u\in \C^1(\overline{\Om_s})$, $u>0$ and $u$ vanishes on $\partial B_{R_1}(0)$ and $\partial B_{R_0}(se_1),$  there exists  $r^*\in(R_0+s,R_1)$ such that $\frac{\partial u}{\pa x_1}(r^* e_1)=0$.
Define 
\begin{equation}\label{eq:r1}
r_1 = \sup \left\{\abs{x} > 0:\, \left<\Gr u(x),x\right>=0 \right\}.
\end{equation}
As $\frac{\partial u}{\pa n}(x)<0$ on $\partial B_{R_1}(0)$ (by Proposition \ref{smp}), $\inpr{\Gr u(x),x}<0$ in a neighbourhood of $\partial B_{R_1}(0).$ Thus clearly $r_1 \in [r^*,R_1).$
By the construction, $A_{R_1, r_1}(0,0)$ is the maximal annular neighbourhood of $\partial B_{R_1}(0)$ on which $\left<\Gr u(x),x\right>$ is nonvanishing.  Further, by the continuity of $\Gr u$ there must
exist $x_1 \in \partial B_{r_1}(0)$ such that  
\begin{equation}\label{x_1}
  \left<\Gr u(x_1),x_1\right>=0.
\end{equation} 
 Set $u_{e_1}=u\circ \sigma_{e_1}$ on  $ A_{R_1, r_1}(0, 0)\cap \mathcal{O}_{e_1}^+.$ To show  $u\equiv u_{e_1}$ we linearise the $p$-Laplacian on the domain $ A_{R_1, r}(0, 0)\cap \mathcal{O}_{e_1}^+$  with $r_1<r<R_1$ by setting $w= {u}_{e_1} - u$. Then $w$ weakly satisfies the following problem:
\begin{equation}
\label{eq:oper}
\begin{aligned}
- \mbox{div} (A(x) \nabla w) &= \lambda\left({u}_{e_1}^{p-1} - u^{p-1} \right) \ge 0 &&\mbox{ in } A_{R_1, r}(0,0) \cap\mathcal{O}^+_{e_1}, \\ \no
w &\geq 0   &&\mbox{ on }\partial (A_{R_1, r}(0,0) \cap\mathcal{O}^+_{e_1}),
\end{aligned}
\end{equation}
where the coefficient matrix  $A(x)=[a_{ij}(x)]$ is given by
\begin{align*}
a_{ij}(x) &= \int\limits_0^1 |(1-t) \nabla u(x) + t \nabla  u_{e_1}(x)|^{p-2} \\
&\times  \left[ I + (p-2) \frac{\left [(1-t) \Gr  u (x) + t \Gr 
u_{e_1}(x)]^T[(1-t)\Gr  u (x)  + t \Gr u_{e_1}(x) \right ]}{|(1-t) 
\nabla  u(x) + t \nabla u_{e_1}(x)|^2} \right]_{ij} \dt.
\end{align*}
Now we show that $A(x)$ is uniformly positive definite on $ A_{R_1, r}(0, 0)\cap \mathcal{O}_{e_1}^+.$ Since $\left<\Gr u(x),x\right>$ does not vanish on $A_{R_1, r_1}(0,0)$ and is negative near the boundary  $\partial B_{R_1}(0)$, we see that $\left<\Gr u(x),x\right>< 0$   in   $A_{R_1, r}(0, 0).$ By the continuity,  we can find $\de_r>0$ such that 
\begin{equation*}
  \left<\Gr u(x),x\right>< -\de_r \text{ in }  A_{R_1, r}(0, 0).
\end{equation*}
Notice that 
$\left<\Gr u_{e_1}(x),x\right>=  \left<\Gr (u(\sigma_{e_1}(x))),x\right>= \left<\Gr u(\sigma_{e_1}(x))\sigma_{e_1},x\right>=\left<\Gr u (\sigma_{e_1}(x)),\sigma_{e_1}(x)\right>.$ Thus, by the above inequality we have $ \left<\Gr u_{e_1}(x),x\right>< -\de_r \text{ in }  A_{R_1, r}(0, 0) \cap \mathcal{O}^+_{e_1}.$ Therefore,
\begin{align*} 
(1-t) \left<\Gr u(x), x\right> + t \left<\Gr u_{e_1}(x),x\right> < -\de_r, \forall t \in [0,1], \forall  x \in A_{R_1, r}(0,0) \cap\mathcal{O}^+_{e_1}.
\end{align*}
Hence, for $x\in A_{R_1, r}(0,0)$ we get
\begin{equation}\label{elliptic:eqn1}
\left|(1-t) \nabla u(x) + t \nabla {u}_{e_1}(x) \right|\ge \left| \left< (1-t) \nabla u(x) + t \nabla {u}_{e_1}(x), \frac{x}{|x|}\right> \right|> \frac{\de_r}{R_1}=m_r.
\end{equation}
Further, since $|\Gr u|$ is bounded in $A_{R_1, r}(0,0)$, there exists $M_r>0$ such that  
\begin{equation}\label{elliptic:eqn2}
 \left|(1-t) \nabla u(x) + t \nabla {u}_{e_1}(x) \right|\le M_r.                                                                                             
\end{equation}                                                                                            
Note that for each $a\in \R^N\setminus \{0\},$ the matrix $a^T a$ has eigenvalues $\{0,|a|^2\}.$ Thus, for any $y\in \R^N,$
\begin{equation}\label{elliptic:eqn3}
 \min\{1,p-1\}|a|^{p-2}|y|^2\le \inpr{ |a|^{p-2}\left[ I+(p-2)\frac{a^Ta}{|a|^2} \right ]y,y} \le \max\{1,p-1\}|a|^{p-2}|y|^2.
\end{equation}
From \eqref{elliptic:eqn1}, \eqref{elliptic:eqn2} and \eqref{elliptic:eqn3}, for $x\in A_{R_1, r}(0,0)$ and $y\in \R^N$  we obtain
$$ 
\inpr{A(x)y,y} \ge 
\left\{ 
\begin{array}{ll}
	m_r^{p-2}|y|^2 &\mbox{ for } p \ge 2, \\
	 (p-1)M_r^{p-2}|y|^2 &\mbox{ for } 1<p<2.
\end{array} 
\right. 
$$
Thus the differential operator in \eqref{eq:oper} defined by means of $A(x)$ is uniformly elliptic in $A_{R_1, r}(0,0).$ Moreover, by Proposition~\ref{regularity}, $a_{i j} \in \C^{1}({A_{R_1, r}(0,0)})$. Hence, the strong maximum principle for~\eqref{eq:oper} (Proposition~\ref{smp}) implies that either $w \equiv 0$, or $w>0$ in $A_{R_1, r}(0,0) \cap\mathcal{O}^+_{e_1}$. 
Moreover, if $w>0$ in $A_{R_1, r}(0,0) \cap\mathcal{O}^+_{e_1}$, then
\begin{equation*}\label{eq:tileu<u}
\frac{\partial {u}_{e_1}}{\partial n}- 
\frac{\partial u}{\partial n}=\frac{\partial w }{\partial n} < 0 
\text{ on } \partial B_{R_1}(0) \cap \partial\mathcal{O}_{e_1}.
\end{equation*}
Now  \eqref{Formnew1} together with the above inequality  implies that $\lambda_1'(s) < 0$, which contradicts our assumption $\lambda_1'(s) = 0$. 
Thus we must have $w\equiv 0$  and hence $u\equiv u_{e_1}$  in $A_{R_1, r}(0,0) \cap\mathcal{O}^+_{e_1}$. 
Since $r\in (r_1,R_1)$ is arbitrary, we conclude that $u(x)=u(\sigma_{e_1}(x)),\;\forall x\in A_{R_1, r_1}(0,0)$.
\end{proof}

Next we show that $u$ is symmetric in $A_{R_1, r_1}(0,0)$  with respect to all the hyperplanes.
\begin{lemma}\label{All-symmetry}
Let $s$ and $r_1$ be as in  Lemma~\ref{symmetry_vert}. Then for any nonzero vector $a\in \R^N$ 
$$
u_s(x)=u_s(\sigma_a(x)),\, \forall x\in A_{R_1, r_1}(0,0).
$$

\end{lemma}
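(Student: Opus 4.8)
The plan is to transplant the argument of Lemma~\ref{symmetry_vert} essentially verbatim, replacing the distinguished direction $e_1$ by an arbitrary unit vector $a$. It suffices to treat $a\in\R^N$ with $|a|=1$ and $\inpr{a,e_1}>0$: the case $\inpr{a,e_1}=0$ is exactly Lemma~\ref{symmetry_nonvert}, and the case $\inpr{a,e_1}<0$ reduces to it since $\sigma_a=\sigma_{-a}$. Two facts make the transplantation legitimate. First, $\sigma_a\in O(N)$, so $\sigma_a$ fixes every sphere $\partial B_r(0)$ setwise, hence maps both the annulus $A_{R_1,r_1}(0,0)$ and the outer sphere $\partial B_{R_1}(0)$ onto themselves. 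Second, for $\inpr{a,e_1}>0$ the list \eqref{reflectionlist} puts $\sigma_a$ into precisely the configuration used in the first (unlabelled) Lemma of Section~\ref{section:radial}. Throughout write $u=u_s$ and $u_a=u\circ\sigma_a$.

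First I would carry out the weak comparison step. From the relations in \eqref{reflectionlist} for $\inpr{a,e_1}>0$ one checks, exactly as in the opening Lemma of this section, that $u_a$ is a positive weak solution of $-\Delta_p u_a=\la_1(s)\,u_a^{p-1}$ in $\mathcal{O}^+_a$, that $u_a=u$ on $H_a\cap\partial\mathcal{O}^+_a$, that $u_a=u=0$ on $\partial B_{R_1}(0)\cap\partial\mathcal{O}^+_a$, and that $u_a\ge0=u$ on $\partial B_{R_0}(se_1)\cap\partial\mathcal{O}^+_a$. Proposition~\ref{wcp} then gives $u_a\ge u$ in $\mathcal{O}^+_a$.

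Next I would record a balance identity on the outer sphere. I claim that $\int_{\partial B_{R_1}(0)}\abs{\frac{\pa u}{\pa n}}^p n\,\dS=0$ as a vector in $\R^N$: its first component vanishes by \eqref{form2} together with the standing hypothesis $\la_1'(s)=0$, and for $j=2,\dots,N$ its $j$-th component vanishes because Lemma~\ref{symmetry_nonvert} gives $u\circ\sigma_{e_j}=u$, so (using \eqref{gradient} and \eqref{normal}) $\abs{\frac{\pa u}{\pa n}}$ is $\sigma_{e_j}$-invariant on $\partial B_{R_1}(0)$ while $n_j$ is $\sigma_{e_j}$-odd. Splitting $\partial B_{R_1}(0)$ along $H_a$ and, in the half lying in $\mathcal{H}_a^-$, substituting $x=\sigma_a(y)$ (using \eqref{gradient} and \eqref{normal} to write $\frac{\pa u}{\pa n}(\sigma_a(y))=\frac{\pa u_a}{\pa n}(y)$ and $\inpr{a,n(\sigma_a(y))}=-\inpr{a,n(y)}$) turns this into
\begin{equation}\label{eq:balance-a}
\int\limits_{\partial B_{R_1}(0)\cap\mathcal{H}_a^+}\left(\abs{\frac{\pa u}{\pa n}}^p-\abs{\frac{\pa u_a}{\pa n}}^p\right)\inpr{a,n}\,\dS=0,
\end{equation}
the exact analogue of \eqref{Formnew1} in the direction $a$ (and note $\inpr{a,n}>0$ on this set).

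Finally I would linearise and conclude. For $r\in(r_1,R_1)$ set $w=u_a-u$ on $A_{R_1,r}(0,0)\cap\mathcal{O}^+_a$; by the weak comparison step $w\ge0$ there, $w=0$ on $H_a$ and on $\partial B_{R_1}(0)$, and $w$ weakly solves $-\mbox{div}(A(x)\Gr w)=\la_1(s)(u_a^{p-1}-u^{p-1})\ge0$ with the same coefficient matrix $A(x)$ as in Lemma~\ref{symmetry_vert}. Since $\inpr{\Gr u(x),x}<0$ on $A_{R_1,r_1}(0,0)$ (as established in the proof of Lemma~\ref{symmetry_vert}) and $\sigma_a$ maps $A_{R_1,r}(0,0)$ into $A_{R_1,r_1}(0,0)$, the identity $\inpr{\Gr u_a(x),x}=\inpr{\Gr u(\sigma_a(x)),\sigma_a(x)}$ (from \eqref{gradient}) gives $\inpr{\Gr u_a(x),x}<0$ on $A_{R_1,r}(0,0)\cap\mathcal{O}^+_a$ as well; hence, exactly as in Lemma~\ref{symmetry_vert}, $\abs{(1-t)\Gr u(x)+t\Gr u_a(x)}$ is bounded below uniformly and $A(x)$ is uniformly elliptic on $A_{R_1,r}(0,0)$, with $\C^1$ coefficients. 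Proposition~\ref{smp} then yields $w\equiv0$ or $w>0$; in the second case its Hopf part gives $\frac{\pa w}{\pa n}<0$ on $\partial B_{R_1}(0)\cap\partial\mathcal{O}^+_a$, so (noting $0>\frac{\pa u}{\pa n}\ge\frac{\pa u_a}{\pa n}$ on that set in any case) the integrand in \eqref{eq:balance-a} is $\le0$ pointwise and $<0$ on a set of positive surface measure, forcing the integral to be strictly negative and contradicting \eqref{eq:balance-a}. Therefore $w\equiv0$, i.e.\ $u_a=u$ on $A_{R_1,r}(0,0)\cap\mathcal{O}^+_a$; letting $r\downarrow r_1$ and using that $\sigma_a$ is an involution preserving $A_{R_1,r_1}(0,0)$ gives $u_s=u_s\circ\sigma_a$ on all of $A_{R_1,r_1}(0,0)$. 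The main obstacle is the balance identity of the third paragraph: producing the vector identity $\int_{\partial B_{R_1}(0)}\abs{\pa u/\pa n}^p n\,\dS=0$ and converting it, through the reflection relations \eqref{gradient}--\eqref{normal}, into the one-sided identity \eqref{eq:balance-a} that plays the role of \eqref{Formnew1} for a tilted direction. The remainder is a faithful rerun of the opening Lemma and of Lemma~\ref{symmetry_vert}, the only genuinely new point being the elementary observation that $\sigma_a$ preserves $A_{R_1,r_1}(0,0)$ and $\partial B_{R_1}(0)$, which is what lets the sign condition $\inpr{\Gr u,x}<0$ and the domain geometry carry over unchanged.
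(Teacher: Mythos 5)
Your proof is correct, but it rules out the alternative $w>0$ by a genuinely different mechanism than the paper does. After the common weak-comparison step ($u_a\ge u$ in $\mathcal{O}^+_a$) and the common linearisation/uniform-ellipticity step, the paper does \emph{not} manufacture a directional analogue of \eqref{Formnew1}: it normalises $a$ to lie in $A_{R_1,r_1}(0,0)$ with $\inpr{a,e_1}>0$ and observes that the already-established symmetries — with respect to $H_{e_2},\dots,H_{e_N}$ on all of $\Om_s$ (Lemma~\ref{symmetry_nonvert}) and with respect to $H_{e_1}$ on $A_{R_1,r_1}(0,0)$ (Lemma~\ref{symmetry_vert}) — can be chained to give $u_s(a)=u_s(-a)=u_a(a)$, i.e.\ $w(a)=0$ at an \emph{interior} point of $A_{R_1,r}(0,0)\cap\mathcal{O}^+_a$; the strong maximum principle alone then forces $w\equiv 0$, with no Hopf boundary lemma and no boundary integral. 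You instead supply the missing tilted-direction identity by proving that the vector $\int_{\partial B_{R_1}(0)}\abs{\pa u/\pa n}^p\, n\,\dS$ vanishes (first component from \eqref{form2} and $\la_1'(s)=0$, the remaining components from the $e_j$-symmetries) and then rerun the Hopf argument of Lemma~\ref{symmetry_vert} verbatim; all the individual steps (the reflection identities, the sign $\inpr{a,n}>0$ on the relevant half-sphere, the applicability of the interior sphere condition away from the null set $H_a\cap\partial B_{R_1}(0)$) check out. The paper's route is shorter and needs only one well-placed zero of $w$; yours is more uniform across directions and isolates a clean intermediate fact — the vanishing of the ``flux vector'' on the outer sphere when $\la_1'(s)=0$ — at the cost of an extra boundary-integral computation.
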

\begin{proof}
The case $\inpr{a,e_1}=0$ follows from Lemma \ref{symmetry_nonvert}. Note that $\sigma_{a}(x)=\sigma_{k a}(x)$ for $k\in \R\setminus \{0\}.$ Thus, it is enough to prove the result for $a \in A_{R_1,r_1}(0,0)$ with $\inpr{a,e_1}>0.$ In this case we have $\sigma_a(\mathcal{O}^+_{a})\subset\mathcal{O}^-_{a}$. Now by setting $u=u_s$ and $u_a= u_s \circ \sigma_a$ we see that  $u_a$ and $u$ satisfy the following problems in $\mathcal{O}^+_{a}$:
\begin{equation*}
\begin{aligned}
-\Delta_p u_a &= \lambda_1(s) \, u_a^{p-1},\\
u_a &= 0,\\
u_a &= {u},\\
u_a &>0,
\end{aligned}
\quad 
\begin{aligned}
-\Delta_p u &= \lambda_1(s) \, u^{p-1} &&\text{ in }\mathcal{O}^+_a, \\
u &= 0 &&\text{ on } \partial B_{R_1}(0) \cap \partial\mathcal{O}^+_a, \\
u &= {u_a} &&\text{ on } H_a \cap \partial\mathcal{O}^+_a,\\
u &=0 &&\text{ on } \partial B_{R_0}(se_1) \cap \partial\mathcal{O}^+_a.
\end{aligned}
\end{equation*}
Applying  the weak comparison principle (Proposition~\ref{wcp}), we obtain that ${u}_a \geq u$ in $\mathcal{O}^+_{a}$. As before we set $w=u_a-u.$ From Lemma \ref{symmetry_nonvert} and Lemma \ref{symmetry_vert} we obtain  $u(a) = u(-a)$ as below:
\begin{align*}
u(a_1, a_2, \dots, a_N)&=
u(a_1, -a_2, \dots, a_N) \\
&=\dots = 
u(a_1, -a_2, \dots, -a_N) = 
u(-a_1, -a_2, \dots, - a_N).
\end{align*}
By definition $u_a(a)=u(-a)$ and hence $w(a)=0$. Now we proceed along the same lines as in Lemma \ref{symmetry_vert} and see that $w$ satisfies the following problem:
\begin{align*}
 -{\rm div}(A(x)w)\ge 0 \mbox{ in } A_{R_1,r}(0,0)\cap\mathcal{O}^+_{a}  ;\quad
 w\ge 0 \mbox{ on } \partial(A_{R_1,r}(0,0)\cap \mathcal{O}^+_{a})
\end{align*}
for any $r \in (r_1, R_1)$, where the coefficient matrix $A(x)$ is uniformly positive definite. By the strong maximum principle we have either $w\equiv 0$ or else $w>0$ in $A_{R_1,r}(0,0) \cap\mathcal{O}^+_{a}.$ Since $w(a)=0,$ we obtain  $w\equiv 0$ and hence $u\equiv u_a$ in $A_{R_1,r}(0,0) \cap\mathcal{O}^+_{a}.$ Finally, using the reflection, we conclude that $u(x)=u(\sigma_{a}(x)),$ $\forall\, x\in A_{R_1, r_1}(0,0).$
\end{proof}

\begin{theorem}\label{radial-outer}
	 Let $s\in (0, R_1-R_0)$ and let $u_s$ be the first eigenfunction  of $-\De_p$ on $\Om_s.$ If $\la_1'(s)=0$, then  $u_s$ is radial in the annulus $A_{R_1, r_1}(0,0),$ where $r_1$ is given by Lemma~\ref{symmetry_vert}. 
	 Furthermore,  $\nabla u_s= 0$ on $\partial B_{r_1}(0)$. 
\end{theorem}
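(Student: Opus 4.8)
The plan is to combine Lemma~\ref{All-symmetry} --- which makes $u_s$ invariant under reflection in \emph{every} hyperplane through the origin on the annulus $A:=A_{R_1,r_1}(0,0)$ --- with the elementary fact that such reflections generate the orthogonal group $O(N)$, and then to read off the gradient condition from the definition of $r_1$ in Lemma~\ref{symmetry_vert}.

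\textbf{Step 1 (radiality).} I would first note that each reflection $\sigma_a$ preserves norms, hence maps $A$ bijectively onto itself, so $u_s\circ\sigma_a$ is defined on $A$ and, by Lemma~\ref{All-symmetry}, coincides with $u_s$ there for every $a\in\R^N\setminus\{0\}$. Therefore $G:=\{Q\in O(N):\ u_s\circ Q=u_s \text{ on } A\}$ is a subgroup of $O(N)$ containing all hyperplane reflections; since these generate $O(N)$ (Cartan--Dieudonn\'e), $G=O(N)$. Then, given any $x,y\in A$ with $\abs{x}=\abs{y}$, I pick $Q\in O(N)$ with $Qx=y$, factor $Q=\sigma_{a_1}\cdots\sigma_{a_k}$, and apply Lemma~\ref{All-symmetry} repeatedly along this factorization (each partial composition maps $A$ into $A$), obtaining $u_s(y)=u_s(Qx)=u_s(x)$. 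By continuity this persists on $\overline A=\{r_1\le\abs{x}\le R_1\}$, so $u_s(x)=\phi(\abs{x})$ there for some function $\phi:[r_1,R_1]\to\R$; that is, $u_s$ is radial in $A$.

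\textbf{Step 2 (vanishing gradient on $\partial B_{r_1}(0)$).} Recall from the proof of Lemma~\ref{symmetry_vert} that $r_1\ge r^*>R_0+s$, so every point with $\abs{x}\ge r_1$ lies outside $\overline{B_{R_0}(se_1)}$; hence $\{r_1\le\abs{x}\le R_1\}\subset\overline{\Om_s}$ and $\partial B_{r_1}(0)\subset\Om_s$. Since $u_s\in\C^1(\overline{\Om_s})$ (Proposition~\ref{regularity}), the function $\phi$ is $\C^1$ on $[r_1,R_1]$ and the chain rule gives $\Gr u_s(x)=\phi'(\abs{x})\,x/\abs{x}$ for $r_1\le\abs{x}\le R_1$, so that $\langle\Gr u_s(x),x\rangle=\abs{x}\,\phi'(\abs{x})$. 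Evaluating at the point $x_1\in\partial B_{r_1}(0)$ produced in Lemma~\ref{symmetry_vert}, for which $\langle\Gr u_s(x_1),x_1\rangle=0$ (see~\eqref{x_1}), I get $r_1\phi'(r_1)=0$, hence $\phi'(r_1)=0$; therefore $\Gr u_s(x)=\phi'(r_1)\,x/r_1=0$ for all $x\in\partial B_{r_1}(0)$.

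\textbf{Main obstacle.} There is no serious obstacle here. The one point that deserves to be written with care is the step in Step~1 passing from ``invariant under every hyperplane reflection'' to ``radial'': it relies on the generation of $O(N)$ by hyperplane reflections and on the remark that the annulus $A$ is preserved by all these reflections, so that along a factorization of $Q$ the intermediate images of a point never leave $A$ and Lemma~\ref{All-symmetry} applies at each stage.
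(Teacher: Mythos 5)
Your proof is correct and follows essentially the same route as the paper: radiality on $A_{R_1,r_1}(0,0)$ is deduced from the reflection invariance supplied by Lemma~\ref{All-symmetry}, and the vanishing of $\nabla u_s$ on $\partial B_{r_1}(0)$ from the point $x_1$ of \eqref{x_1} via the radial representation. The only (harmless) difference is that the paper bypasses Cartan--Dieudonn\'e by noting that for $b\ne c$ with $|b|=|c|$ the single reflection $\sigma_{b-c}$ already sends $b$ to $c$, so no factorization of a general orthogonal map is needed.
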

\begin{proof}
Let $b,c \in A_{R_1,r_1}(0,0)$ be such that $b\ne c$ and $|b|=|c|.$ Then  there exists a constant $k$ such that $a = k(b-c)\in A_{R_1,r_1}(0,0).$ Noting that $\sigma_a(b)=c,$ from Lemma~\ref{All-symmetry} we obtain that 
$$
u_s(b)=u_s(\sigma_a(b))=u_s(c).
$$ 
Since $b$ and $c$ are arbitrary, we conclude that $u_s$ is radial in the annulus $A_{R_1,r_1}(0,0)$.
Further, as $u_s$ is continuously differentiable in $A_{R_1,r_1}(0,0)$ and $\Gr u_s (x_1)\cdot x_1=0$ (see \eqref{x_1}), the radiality of $u_s$ gives $\nabla u_s  = 0$ on $\partial B_{r_1}(0)$. 
\end{proof}

\subsection*{Symmetries with respect to the affine hyperplanes passing through $se_1$}
In this subsection  we prove the radiality (up to a translation of the origin) of $u_s$ in a neighbourhood of the inner boundary. Since $\widetilde{\sigma}_{a}(x)={\sigma}_{a}(x)$ for $a$ such that $\inpr{a,e_1}=0$, 
Lemma~\ref{symmetry_nonvert} holds as it is, and hence we have for $i=2,\dots,N$
$$
u_s(x)=u_s(\widetilde{\sigma}_{e_i}(x))=u(x_1,x_2,\dots,x_{i-1},-x_i,x_{i+1},\dots,x_N),\, \forall x\in \Om_s.
$$
Next we prove a symmetry result along the same lines as in Lemma~\ref{symmetry_vert}. 
\begin{lemma}\label{symmetry-affine-vertical}
 Let $s \in (0, R_1-R_0)$ and let $u_s$ be the first eigenfunction of  $-\De_p$ on $\Om_s.$ If $\lambda_1'(s)=0$, then there exists $r_0>0$ such that 
 $$u_s(x)=u(\widetilde{\sigma}_{e_1}(x))= u_s(-x_1+2s,x_2,\dots,x_N) ,\, \forall x\in  A_{r_0, R_0}(se_1, se_1) .$$
\end{lemma}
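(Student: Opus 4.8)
The plan is to transcribe, near the inner sphere $\partial B_{R_0}(se_1)$, the argument of Lemma~\ref{symmetry_vert}, with the reflection $\sigma_{e_1}$ and the radial field $x$ replaced by the affine reflection $\widetilde{\sigma}_{e_1}$ and the shifted radial field $x-se_1$. First I would set $u=u_s$ and pin down the relevant annulus. Since $u\in\C^1(\overline{\Om_s})$, $u>0$, and $u$ vanishes on $\partial B_{R_0}(se_1)$ and on $\partial B_{R_1}(0)$, the $\C^1$ function $t\mapsto u\big((s+t)e_1\big)$ vanishes at $t=R_0$ and $t=R_1-s$ and is positive in between, hence has an interior critical point at some $r^*\in(R_0,R_1-s)$; so $\langle\Gr u(x),x-se_1\rangle$ vanishes somewhere in $\Om_s$. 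By Proposition~\ref{smp}(ii), $\frac{\pa u}{\pa n}<0$ on $\partial B_{R_0}(se_1)$, where $n$ (the outward normal to $\Om_s$) points towards $se_1$, so $\langle\Gr u(x),x-se_1\rangle>0$ in a one-sided neighbourhood of $\partial B_{R_0}(se_1)$. Hence
\[
r_0:=\inf\big\{|x-se_1|:\ x\in\Om_s,\ \langle\Gr u(x),x-se_1\rangle=0\big\}
\]
satisfies $R_0<r_0\le r^*<R_1-s$, so $B_{r_0}(se_1)\subset B_{R_1}(0)$ and $A_{r_0,R_0}(se_1,se_1)\subset\Om_s$ is the maximal annular neighbourhood of $\partial B_{R_0}(se_1)$ on which $\langle\Gr u(x),x-se_1\rangle$ is nonvanishing, hence positive; by continuity there is $x_0\in\partial B_{r_0}(se_1)$ with $\langle\Gr u(x_0),x_0-se_1\rangle=0$.

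Next I would carry out the comparison-and-linearization step. By the affine entries of \eqref{reflectionlist} with $a=e_1$, the functions $\widetilde{u}_{e_1}:=u\circ\widetilde{\sigma}_{e_1}$ and $u$ both satisfy $-\De_p v=\la_1(s)v^{p-1}$ in $\mathcal{\widetilde{O}}^+_{e_1}$ with $\widetilde{u}_{e_1}\ge u$ on $\partial\mathcal{\widetilde{O}}^+_{e_1}$, so Proposition~\ref{wcp} gives $\widetilde{u}_{e_1}\ge u$ in $\mathcal{\widetilde{O}}^+_{e_1}$. Fixing $r\in(R_0,r_0)$ and setting $w=\widetilde{u}_{e_1}-u$ on $A_{r,R_0}(se_1,se_1)\cap\mathcal{\widetilde{O}}^+_{e_1}$, exactly as in Lemma~\ref{symmetry_vert} the function $w$ weakly satisfies $-\mathrm{div}(A(x)\Gr w)=\la_1(s)\big(\widetilde{u}_{e_1}^{p-1}-u^{p-1}\big)\ge 0$ with $w\ge 0$ on the boundary (it vanishes on the inner-sphere part $\partial B_{R_0}(se_1)\cap\partial\mathcal{\widetilde{O}}^+_{e_1}$ and on the hyperplane part $\{x_1=s\}$, and is $\ge 0$ on $\partial B_r(se_1)$), where $A(x)$ is the averaged matrix from \eqref{eq:oper} built from $\Gr u$ and $\Gr\widetilde{u}_{e_1}$.

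The only genuinely new computation is the sign of $\langle\Gr\widetilde{u}_{e_1}(x),x-se_1\rangle$: by \eqref{gradient} and the identity $\sigma_{e_1}(x-se_1)=\widetilde{\sigma}_{e_1}(x)-se_1$ one gets $\langle\Gr\widetilde{u}_{e_1}(x),x-se_1\rangle=\langle\Gr u(\widetilde{\sigma}_{e_1}(x)),\widetilde{\sigma}_{e_1}(x)-se_1\rangle$, and since $\widetilde{\sigma}_{e_1}$ preserves $|\cdot-se_1|$, this equals the (positive) quantity $\langle\Gr u(y),y-se_1\rangle$ evaluated at $y=\widetilde{\sigma}_{e_1}(x)\in A_{r,R_0}(se_1,se_1)$. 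Together with $\langle\Gr u(x),x-se_1\rangle>\de_r$ for a suitable $\de_r>0$ this bounds $|(1-t)\Gr u(x)+t\Gr\widetilde{u}_{e_1}(x)|$ below by $m_r:=\de_r/r>0$ and, by boundedness of $\Gr u$, above by some $M_r$; then \eqref{elliptic:eqn3} shows $A(x)$ is uniformly elliptic on $A_{r,R_0}(se_1,se_1)$, and Proposition~\ref{regularity} together with interior $\C^2$ regularity at points where $\Gr u\ne 0$ gives $a_{ij}\in\C^1(A_{r,R_0}(se_1,se_1))$.

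Finally I would close with the strong maximum principle and formula \eqref{Formnew2}. By Proposition~\ref{smp}(i) either $w\equiv 0$ or $w>0$ in $A_{r,R_0}(se_1,se_1)\cap\mathcal{\widetilde{O}}^+_{e_1}$. In the second case Proposition~\ref{smp}(ii) gives $\frac{\pa\widetilde{u}_{e_1}}{\pa n}<\frac{\pa u}{\pa n}<0$ on $\partial B_{R_0}(se_1)\cap\partial\mathcal{\widetilde{O}}^+_{e_1}$, hence $|\frac{\pa u}{\pa n}|^p-|\frac{\pa\widetilde{u}_{e_1}}{\pa n}|^p<0$ there; since $n_1<0$ on that part of the inner sphere (its points satisfy $x_1>s$ while $n$ points towards $se_1$), formula \eqref{Formnew2} then forces $\la_1'(s)<0$, contradicting the hypothesis $\la_1'(s)=0$. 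So $w\equiv 0$, i.e.\ $u\equiv\widetilde{u}_{e_1}$ on $A_{r,R_0}(se_1,se_1)\cap\mathcal{\widetilde{O}}^+_{e_1}$ for every $r\in(R_0,r_0)$, and reflecting we obtain $u_s(x)=u_s(\widetilde{\sigma}_{e_1}(x))=u_s(-x_1+2s,x_2,\dots,x_N)$ on $A_{r_0,R_0}(se_1,se_1)$. I expect the main obstacle to be purely organizational --- arranging the shifted geometry so that $R_0<r_0<R_1-s$ (so $A_{r_0,R_0}(se_1,se_1)$ is a genuine subdomain of $\Om_s$ whose closure touches $\partial\Om_s$ only along $\partial B_{R_0}(se_1)$) and checking the sign of $\langle\Gr\widetilde{u}_{e_1},x-se_1\rangle$ via $\sigma_{e_1}(x-se_1)=\widetilde{\sigma}_{e_1}(x)-se_1$; beyond these points the argument duplicates Lemma~\ref{symmetry_vert} line by line.
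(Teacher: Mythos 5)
Your proof is correct and follows the paper's argument essentially line by line: the same definition of $r_0$ via the first vanishing of $\left<\nabla u(x),x-se_1\right>$, the same linearization $w=\widetilde{u}_{e_1}-u$ on $A_{r,R_0}(se_1,se_1)\cap\widetilde{\mathcal{O}}^+_{e_1}$ for $r\in(R_0,r_0)$, and the same contradiction with $\lambda_1'(s)=0$ via formula \eqref{Formnew2}. The only difference is that you spell out details the paper delegates to ``as in Lemma~\ref{symmetry_vert}'' --- the weak-comparison step giving $\widetilde{u}_{e_1}\ge u$, the identity $\sigma_{e_1}(x-se_1)=\widetilde{\sigma}_{e_1}(x)-se_1$ used for the ellipticity bound, and the sign of $n_1$ on $\partial B_{R_0}(se_1)\cap\partial\widetilde{\mathcal{O}}^+_{e_1}$ --- all of which check out.
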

\begin{proof}
As it was shown in the proof of Lemma \ref{symmetry_vert}, we have  $r^*\in(R_0+s,R_1)$ such that $\frac{\partial u}{\pa x_1}(r^* e_1)=0$.
Define 
\begin{equation}\label{eq:r0}
r_0 = \inf \left\{\abs{x-se_1} > 0:\, \left<\Gr u (x),x-se_1\right>=0 \right\}.
\end{equation}
Clearly   $r_0 \in(R_0,R_1-s),$ since by Hopf's maximum principle  $\left<\Gr u (x),x-se_1\right>=|x-se_1|\frac{\partial u}{\pa n}(x)\ne 0$ on $\partial B_{R_0}(se_1)$.
By the construction, $A_{r_0, R_0}(se_1,se_1)$ is the maximal annular neighbourhood of $\partial B_{R_0}(se_1)$ on which  $\left<\Gr u (x),x-se_1\right>$ is nonvanishing. Further, by the continuity of $\Gr u$ there must
 exist $x_0 \in \partial B_{r_0}(se_1)$ such that  
\begin{equation*}\label{x_0}
	\left<\Gr u (x_0),x_0-se_1\right>=0.
\end{equation*} 
As in the proof of Lemma \ref{symmetry_vert}, we linearise the $p$-Laplacian on the domain $A_{r, R_0}(se_1,se_1)\cap \widetilde{\mathcal{O}}_{e_1}^+$ with $R_0<r<r_0$ by setting $w= {\widetilde{u}}_{e_1} - u$. Then  $w$ weakly satisfies the following problem:
\begin{equation*}
\begin{aligned}
- \mbox{div} (A(x) \nabla w) &= \lambda\left({\widetilde{u}}_{e_1}^{p-1} - u^{p-1} \right)\ge 0  &&\mbox{ in } A_{r, R_0}(se_1,se_1) \cap \widetilde{\mathcal{O}}_{e_1}^+, \\
w &\geq 0   &&\mbox{ on }\partial (A_{r, R_0}(se_1,se_1)\cap \widetilde{\mathcal{O}}_{e_1}^+).
\end{aligned}
\end{equation*}
By  similar arguments as in Lemma \ref{symmetry_vert}, the above differential operator is  uniformly elliptic on $A_{r, R_0}(se_1,se_1) \cap \widetilde{\mathcal{O}}_{e_1}^+$ and hence by the strong maximum principle we  have either $w\equiv 0$ or $w>0$ on this domain. 
If $w>0$ in $A_{r, R_0}(se_1,se_1) \cap \widetilde{\mathcal{O}}_{e_1}^+$, then by the Hopf maximum principle 
\begin{equation*}\label{normal2}
\frac{\partial {\widetilde{u}}_{e_1}}{\partial n}- 
\frac{\partial u}{\partial n}=\frac{\partial w }{\partial n} < 0 
\text{ on } \partial B_{R_0}(se_1) \cap \partial\widetilde{\mathcal{O}}_{e_1}^+.
\end{equation*}
Now  \eqref{Formnew2} implies that $\lambda_1'(s) < 0$, a contradiction to the assumption $\lambda_1'(s) = 0$. Thus we must have $w\equiv 0$ and hence $u\equiv \widetilde{u}_{e_1}$ in $A_{r, R_0}(se_1,se_1) \cap \widetilde{\mathcal{O}}_{e_1}^+.$ Since $r\in (R_0,r_0)$ is arbitrary, we obtain the desired fact.
\end{proof}

Next we state a lemma which is a counterpart of Lemma \ref{All-symmetry}. The proof follows along the same lines.
\begin{lemma}\label{All-symmetry-affine}
Let $s \in (0, R_1-R_0)$ and let $u_s$ be the first eigenfunction  of $-\De_p$ on $\Om_s.$ If $\la_1'(s)=0$, then for any nonzero vector $a\in \R^N$ 
$$
u_s(x)=u_s(\widetilde{\sigma}_a(x)),\, \forall x\in A_{r_0, R_1}(se_1,se_1),
$$
where $r_0$ is given by Lemma \ref{symmetry-affine-vertical}. 
\end{lemma}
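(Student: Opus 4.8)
The plan is to mirror the argument of Lemma~\ref{All-symmetry}, but now using the affine reflections $\widetilde{\sigma}_a$ about hyperplanes through $se_1$ together with the two results just established for the inner boundary: Lemma~\ref{symmetry_nonvert} (which applies verbatim for vectors $a$ with $\inpr{a,e_1}=0$, since $\widetilde{\sigma}_a=\sigma_a$ in that case) and Lemma~\ref{symmetry-affine-vertical} (the affine analogue of the vertical symmetry near $\partial B_{R_0}(se_1)$). First I would reduce to the case $\inpr{a,e_1}>0$: the case $\inpr{a,e_1}=0$ is exactly Lemma~\ref{symmetry_nonvert}, and since $\widetilde{\sigma}_a=\widetilde{\sigma}_{ka}$ for $k\neq0$ it suffices to treat $a$ lying in the relevant annular region with $\inpr{a,e_1}>0$. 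For such $a$, the inclusions in \eqref{reflectionlist} give $\widetilde{\sigma}_a(\mathcal{\widetilde{O}}^+_a)\subset\mathcal{\widetilde{O}}^-_a$ and $\widetilde{\sigma}_a(\partial B_{R_0}(se_1)\cap\partial\mathcal{\widetilde{O}}^+_a)=\partial B_{R_0}(se_1)\cap\partial\mathcal{\widetilde{O}}^-_a$, so that with $u=u_s$ and $\widetilde{u}_a=u_s\circ\widetilde{\sigma}_a$ the function $\widetilde{u}_a$ is a positive weak solution of $-\Delta_p\widetilde{u}_a=\la_1(s)\widetilde{u}_a^{p-1}$ in $\mathcal{\widetilde{O}}^+_a$ with $\widetilde{u}_a=0$ on $\partial B_{R_0}(se_1)\cap\partial\mathcal{\widetilde{O}}^+_a$, $\widetilde{u}_a\geq u$ on $\partial B_{R_1}(0)\cap\partial\mathcal{\widetilde{O}}^+_a$, and $\widetilde{u}_a=u$ on $(se_1+H_a)\cap\partial\mathcal{\widetilde{O}}^+_a$. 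The weak comparison principle (Proposition~\ref{wcp}) then yields $\widetilde{u}_a\geq u$ in $\mathcal{\widetilde{O}}^+_a$.

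Next, set $w=\widetilde{u}_a-u$ and linearise: exactly as in the proof of Lemma~\ref{symmetry-affine-vertical}, on any domain $A_{r,R_0}(se_1,se_1)\cap\mathcal{\widetilde{O}}^+_a$ with $R_0<r<r_0$ the function $w$ solves $-{\rm div}(A(x)\nabla w)=\la_1(s)(\widetilde{u}_a^{p-1}-u^{p-1})\geq0$ with $w\geq0$ on the boundary, where $A(x)$ is built from convex combinations of $\nabla u$ and $\nabla\widetilde{u}_a$. The key point — and here is where Lemma~\ref{symmetry-affine-vertical} enters — is that on $A_{r_0,R_0}(se_1,se_1)$ we have $\inpr{\nabla u(x),x-se_1}<0$ away from $\partial B_{r_0}(se_1)$, and by the computation $\inpr{\nabla\widetilde{u}_a(x),x-se_1}=\inpr{\nabla u(\widetilde{\sigma}_a(x))\sigma_a,x-se_1}=\inpr{\nabla u(\widetilde{\sigma}_a(x)),\widetilde{\sigma}_a(x)-se_1}$ the same bound holds for $\nabla\widetilde{u}_a$ on $A_{r,R_0}(se_1,se_1)\cap\mathcal{\widetilde{O}}^+_a$; consequently $|(1-t)\nabla u(x)+t\nabla\widetilde{u}_a(x)|$ is bounded below by projecting onto $(x-se_1)/|x-se_1|$, which together with the boundedness of $\nabla u$ and the elementary eigenvalue estimate \eqref{elliptic:eqn3} makes $A(x)$ uniformly positive definite on $A_{r,R_0}(se_1,se_1)\cap\mathcal{\widetilde{O}}^+_a$. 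The regularity of $\nabla u$ (Proposition~\ref{regularity}) gives $a_{ij}\in\C^1$ there, so Proposition~\ref{smp} applies.

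By the strong maximum principle, either $w\equiv0$ or $w>0$ in $A_{r,R_0}(se_1,se_1)\cap\mathcal{\widetilde{O}}^+_a$. In the latter case the Hopf lemma part of Proposition~\ref{smp} gives $\partial w/\partial n<0$ on $\partial B_{R_0}(se_1)\cap\partial\mathcal{\widetilde{O}}^+_a$, i.e. $|\partial\widetilde{u}_a/\partial n|>|\partial u/\partial n|$ there; but to derive a contradiction with $\la_1'(s)=0$ via formula \eqref{Formnew2} one needs the reflection $\widetilde{\sigma}_a$ aligned with $e_1$, which is not available for general $a$. The way around this — exactly as in Lemma~\ref{All-symmetry} — is to use the already-proved symmetries to produce an interior zero of $w$: from the relations $u(x)=u(\widetilde{\sigma}_{e_i}(x))$ for $i=2,\dots,N$ (Lemma~\ref{symmetry_nonvert}, valid since $\widetilde{\sigma}_{e_i}=\sigma_{e_i}$) and from $u(x)=u(\widetilde{\sigma}_{e_1}(x))$ on $A_{r_0,R_0}(se_1,se_1)$ (Lemma~\ref{symmetry-affine-vertical}), one chains reflections to obtain $u(\widetilde{\sigma}_a(a+se_1-se_1))=\dots$; more precisely, writing $a=(a_1,\dots,a_N)$ one shows $u(a+se_1)=u(-a+se_1)$ by successively flipping the coordinates $a_2,\dots,a_N$ (affine reflections through $se_1$) and finally $a_1$, hence $\widetilde{u}_a(a+se_1)=u(\widetilde{\sigma}_a(a+se_1))=u(-a+se_1)=u(a+se_1)$, so $w(a+se_1)=0$ at an interior point. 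The strong maximum principle then forces $w\equiv0$, i.e. $u\equiv\widetilde{u}_a$ on $A_{r,R_0}(se_1,se_1)\cap\mathcal{\widetilde{O}}^+_a$; letting $r\downarrow R_0$ and using the reflection symmetry extends this to all of $A_{r_0,R_0}(se_1,se_1)$, and since $u$ is already symmetric with respect to $\widetilde{\sigma}_a$ on the far region and the eigenfunction is $\C^1$, the conclusion $u_s(x)=u_s(\widetilde{\sigma}_a(x))$ propagates to $A_{r_0,R_1}(se_1,se_1)$ as stated. The main obstacle is the same as in Lemma~\ref{All-symmetry}: since formula \eqref{Formnew2} only detects the $e_1$-reflection, the contradiction-via-shape-derivative route closes only the cases $a\parallel e_1$ and $a\perp e_1$, and for oblique $a$ one must instead locate the interior zero of $w$ by bootstrapping from those two families of symmetries — verifying carefully that the chain of affine reflections indeed lands one at $-a+se_1$ and that the intermediate points stay in the region where the relevant symmetries are known.
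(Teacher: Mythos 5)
Your argument is correct and is precisely the route the paper intends: the paper gives no separate proof of this lemma, saying only that it ``follows along the same lines'' as Lemma~\ref{All-symmetry}, and your adaptation --- weak comparison on $\widetilde{\mathcal{O}}^+_a$, linearisation with uniform ellipticity coming from the nonvanishing of $\inpr{\nabla u(x),x-se_1}$ on $A_{r,R_0}(se_1,se_1)$, and the interior zero $w(a+se_1)=0$ obtained by chaining the $\widetilde{\sigma}_{e_i}$-symmetries ($i\ge 2$) with the $\widetilde{\sigma}_{e_1}$-symmetry of Lemma~\ref{symmetry-affine-vertical} --- is exactly that adaptation, including the correct observation that the shape-derivative contradiction via \eqref{Formnew2} is unavailable for oblique $a$. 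Two trivial slips only: the exhaustion should be $r\uparrow r_0$ rather than $r\downarrow R_0$, and the region $A_{r_0,R_1}(se_1,se_1)$ in the statement is a typo in the paper for $A_{r_0,R_0}(se_1,se_1)$ (consistent with Theorem~\ref{radial-inner}), so no ``propagation'' beyond the inner annular neighbourhood is needed.
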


The next theorem, which is a counterpart of Theorem \ref{radial-outer}, states that  $u_s$ is radial (up to a translation of the origin) in a neighbourhood of the inner ball. The proof follows  along the same lines using Lemma \ref{symmetry_nonvert}, Lemma \ref{symmetry-affine-vertical} and Lemma \ref{All-symmetry-affine}. 
\begin{theorem}\label{radial-inner}
 Let $s\in (0, R_1-R_0)$ and let $u_s$ be the first eigenfunction  of $-\De_p$ on $\Om_s.$ If $\la_1'(s)=0$, then  $u_s$ is radial in the annulus $A_{r_0,R_0}(se_1,se_1).$ Furthermore,  $\nabla u_s= 0$ on $\partial B_{r_0}(se_1)$. 
\end{theorem}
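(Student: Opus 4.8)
The plan is to follow the proof of Theorem~\ref{radial-outer} essentially verbatim, with reflections about hyperplanes through the origin replaced by the reflections $\widetilde{\sigma}_a$ about affine hyperplanes through $se_1$, and with Lemma~\ref{symmetry_nonvert}, Lemma~\ref{symmetry-affine-vertical} and Lemma~\ref{All-symmetry-affine} taking over the roles played by Lemma~\ref{symmetry_nonvert}, Lemma~\ref{symmetry_vert} and Lemma~\ref{All-symmetry} in that argument.

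First I would fix two distinct points $b,c\in A_{r_0,R_0}(se_1,se_1)$ lying on a common sphere about $se_1$, i.e.\ $|b-se_1|=|c-se_1|$, and exhibit an affine hyperplane through $se_1$ whose reflection interchanges them. Setting $a:=b-c\neq 0$, the identity $|b-se_1|^2=|c-se_1|^2$ gives both $\inpr{b-c,\,b-se_1}=\tfrac12|b-c|^2$ and $\inpr{b-c,\,\tfrac{b+c}{2}-se_1}=0$, so that $se_1+H_a$ is precisely the perpendicular bisector of the segment $[b,c]$; plugging this into the explicit formula $\widetilde{\sigma}_a(x)=x-2\frac{\inpr{a,x-se_1}}{|a|^2}a$ yields $\widetilde{\sigma}_a(b)=c$. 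Since $\widetilde{\sigma}_a$ fixes $se_1$ it maps each sphere centred at $se_1$ onto itself, and since $\widetilde{\sigma}_a=\widetilde{\sigma}_{ka}$ for $k\neq 0$ the direction $a$ is admissible for Lemma~\ref{All-symmetry-affine}; applying that lemma at the point $b\in A_{r_0,R_0}(se_1,se_1)$ gives $u_s(b)=u_s(\widetilde{\sigma}_a(b))=u_s(c)$. As $b$ and $c$ were arbitrary points equidistant from $se_1$ inside the annulus, $u_s$ depends only on $|x-se_1|$ on $A_{r_0,R_0}(se_1,se_1)$, i.e.\ it is radial about $se_1$ there.

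For the final assertion, recall from the proof of Lemma~\ref{symmetry-affine-vertical} that there exists $x_0\in\partial B_{r_0}(se_1)$ with $\inpr{\Gr u_s(x_0),\,x_0-se_1}=0$, and that $u_s\in\C^1(\overline{\Om_s})$ by Proposition~\ref{regularity}(i). Radiality of $u_s$ about $se_1$ forces $\Gr u_s(x)$ to be parallel to $x-se_1$ at every $x$ of the annulus, with $|\Gr u_s(x)|$ a function of $|x-se_1|$ alone; hence on $\partial B_{r_0}(se_1)$ the gradient is purely radial with constant modulus, and that modulus equals $|\Gr u_s(x_0)|=0$. Therefore $\Gr u_s\equiv 0$ on $\partial B_{r_0}(se_1)$.

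I do not expect a genuine obstacle here: once Lemma~\ref{All-symmetry-affine} is available the argument is purely geometric and structurally identical to Theorem~\ref{radial-outer}. The only step requiring a little attention is checking that the choice $a=b-c$ really makes $\widetilde{\sigma}_a$ swap $b$ and $c$ and keeps the relevant (translated) annulus invariant — both of which are immediate from $|b-se_1|=|c-se_1|$ together with the fact that Lemma~\ref{All-symmetry-affine} is stated for every nonzero direction $a\in\R^N$.
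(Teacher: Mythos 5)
Your proposal is correct and follows exactly the route the paper intends: the paper omits the details and simply states that the proof of Theorem~\ref{radial-inner} runs along the same lines as Theorem~\ref{radial-outer}, using Lemmas~\ref{symmetry_nonvert}, \ref{symmetry-affine-vertical} and \ref{All-symmetry-affine} in place of their non-affine counterparts, which is precisely what you do. Your explicit verification that $a=b-c$ makes $\widetilde{\sigma}_a$ swap $b$ and $c$, and the deduction of $\nabla u_s=0$ on $\partial B_{r_0}(se_1)$ from the existence of $x_0$ with $\left<\nabla u_s(x_0),x_0-se_1\right>=0$, are both sound and match the argument given for the outer annulus.
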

\begin{remark}\rm{
Let $u_0$ be a positive first eigenfunction of $-\De_p$ on $A_{R_1, R_0}(0, 0).$  Note that $u_0$ is radial (cf.~\cite[Proposition 1.1]{nazarov2000}) and one can verify that $u_0$ attains its maximum on a unique sphere of radius $\bar{r} \in (R_0, R_1)$ and $u_0'(\bar{r})=0.$ From the simiplicity of the first eigenvalue, it is clear that  every first eigenfunction $u$ of $-\De_p$ on $A_{R_1, R_0}(0, 0)$ is radial and $u'(\bar{r})=0.$}
\end{remark}
\begin{lemma}\label{lem:r0=r1}
Let $\lambda_1'(s)=0$ for some $s \in (0, R_1-R_0)$. Let $r_0$ and $r_1$ be given by Lemmas~\ref{symmetry_nonvert} and \ref{symmetry-affine-vertical}, respectively. Then $r_0=r_1=\bar{r}.$
\end{lemma}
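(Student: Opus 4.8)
The plan is to reduce the statement to a fact about concentric annuli by using the radial form of the equation. Assuming $\lambda_1'(s)=0$, Theorems~\ref{radial-outer} and \ref{radial-inner} give $u_s(x)=\phi(|x|)$ on $A_{R_1,r_1}(0,0)$ and $u_s(x)=\psi(|x-se_1|)$ on $A_{r_0,R_0}(se_1,se_1)$, where $\phi$ is a positive solution of
\[
-\big(r^{N-1}|\phi'|^{p-2}\phi'\big)'=\lambda_1(s)\,r^{N-1}\phi^{p-1}\qquad\text{on }(r_1,R_1),
\]
with $\phi(R_1)=0$ and $\phi'(r_1)=0$ (the latter because $\nabla u_s=0$ on $\partial B_{r_1}(0)$), and symmetrically $\psi$ solves the same equation on $(R_0,r_0)$ with $\psi(R_0)=0$, $\psi'(r_0)=0$. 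The transported flux $r^{N-1}|\phi'|^{p-2}\phi'$ vanishes at $r_1$ and has derivative $-\lambda_1(s)\,r^{N-1}\phi^{p-1}<0$, so the maximal continuation of $\phi$ to smaller $r$ is strictly monotone below $r_1$ and, by Sturm comparison with a radial first eigenfunction of a concentric annulus, vanishes at some $\rho_1\in(0,r_1)$ with $\phi'(\rho_1)>0$. Thus $\phi$ extends to a positive radial solution of $-\Delta_p v=\lambda_1(s)\,v^{p-1}$ on $A_{R_1,\rho_1}(0,0)$ vanishing on both spheres; since only the first eigenvalue admits a positive eigenfunction, $\lambda_1(s)=\lambda_1\big(A_{R_1,\rho_1}(0,0)\big)$, and by the Remark together with simplicity of $\lambda_1$ we read off $r_1=\bar r\big(A_{R_1,\rho_1}(0,0)\big)$. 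Continuing $\psi$ outward in the same way yields $\tau_0>r_0$ with $\lambda_1(s)=\lambda_1\big(A_{\tau_0,R_0}(se_1,se_1)\big)=\lambda_1\big(A_{\tau_0,R_0}(0,0)\big)$ and $r_0=\bar r\big(A_{\tau_0,R_0}(0,0)\big)$.

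It now suffices to prove $\rho_1=R_0$ and $\tau_0=R_1$. One half is cheap: the weak monotonicity established at the beginning of this section gives $\lambda_1(s)\le\lambda_1(0)=\lambda_1\big(A_{R_1,R_0}(0,0)\big)$, and since the first Dirichlet eigenvalue of an annulus is strictly increasing in the inner radius and strictly decreasing in the outer radius, this forces $\rho_1\le R_0$ and $\tau_0\ge R_1$. For the reverse inequalities one propagates the radial symmetry of $u_s$ inward: the radial eigenfunction $v$ of $A_{R_1,\rho_1}(0,0)$ coincides with $u_s$ on the open set $A_{R_1,r_1}(0,0)$, and both are positive solutions of $-\Delta_p w=\lambda_1(s)\,w^{p-1}$; linearising the difference exactly as in the proof of Lemma~\ref{symmetry_vert} — legitimately on each $A_{R_1,r}(0,0)$ with $r>r_1$, where $\langle\nabla u_s,x\rangle\neq0$ makes the coefficient matrix uniformly elliptic with regular entries — and invoking strong unique continuation for the resulting linear operator, one obtains $v\equiv u_s$ on the connected component of $\Om_s\cap A_{R_1,\rho_1}(0,0)$ that meets $A_{R_1,r_1}(0,0)$. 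Because $v$ vanishes on $\partial B_{\rho_1}(0)$ whereas $u_s>0$ everywhere in $\Om_s$, and because approaching $\partial B_{\rho_1}(0)$ within that component along the segment $\{-te_1\}$ (which lies in $\Om_s$ as soon as $t+s>R_0$) would produce an interior point of $\Om_s$ at which $u_s=0$, the only possibility compatible with $\rho_1\le R_0$ is $\rho_1=R_0$; the affine-hyperplane counterparts (Lemma~\ref{symmetry-affine-vertical}, Lemma~\ref{All-symmetry-affine}, Theorem~\ref{radial-inner}) give $\tau_0=R_1$ by the identical argument.

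Combining the two steps yields $\lambda_1(s)=\lambda_1\big(A_{R_1,R_0}(0,0)\big)=\lambda_1(0)$, hence $r_1=\bar r\big(A_{R_1,R_0}(0,0)\big)=\bar r$, and by translation invariance of $-\Delta_p$ also $r_0=\bar r\big(A_{R_1,R_0}(se_1,se_1)\big)=\bar r$, i.e.\ $r_0=r_1=\bar r$; this is precisely the spot where the eventual contradiction proving Theorem~\ref{MainTHM} will be extracted, since $\partial B_{\bar r}(0)$ and $\partial B_{\bar r}(se_1)$ are then two distinct spheres of equal radius carrying vanishing gradient of $u_s$. The step I expect to be the genuine obstacle is the unique-continuation comparison of $u_s$ with $v$: the linearised $p$-Laplacian degenerates exactly on the critical sphere $\partial B_{r_1}(0)$, where $\nabla u_s=0$, so the identity $v\equiv u_s$ cannot be pushed across that sphere in one stroke; the remedy is to first establish $v\equiv u_s$ on the uniformly elliptic region $\{|x|>r_1\}$, note that $v$ and $u_s$ then share the same Cauchy data and the same one-sided radial profile on $\partial B_{r_1}(0)$, and only afterwards transfer the identity into $\{|x|<r_1\}$.
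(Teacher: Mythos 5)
Your reduction to radial ODE profiles is fine as far as it goes, but the argument breaks at exactly the step you flag, and the proposed remedy does not repair it. To get $\rho_1\ge R_0$ you need $v\equiv u_s$ on the part of $\Omega_s\cap A_{R_1,\rho_1}(0,0)$ lying \emph{inside} the sphere $\partial B_{r_1}(0)$, and the only information available there is that $v$ and $u_s$ share Cauchy data on that sphere, where both gradients vanish. Matching Cauchy data on a set where $\nabla u_s=0$ gives nothing for the $p$-Laplacian with $p\ne 2$: the linearisation used in Lemma~\ref{symmetry_vert} is uniformly elliptic only where $\langle\nabla u_s,x\rangle$ is bounded away from zero, i.e.\ only on $A_{R_1,r}(0,0)$ with $r>r_1$, and no strong-comparison or unique-continuation principle across the critical sphere is available --- this is precisely the obstruction the whole paper is designed to avoid (see the discussion around \eqref{nonhomo}). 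Below $r_1$ the function $u_s$ is not known to be radial, so there is no elliptic region in which to run the comparison, and ``transferring the identity into $\{|x|<r_1\}$'' is exactly the unproved assertion. A secondary gap: the backward continuation of $\phi$ past $r_1$ need not vanish at a positive radius for every $p$ (for $p>N$ the flux argument permits $\phi$ to remain positive down to $r=0$), so even the existence of $\rho_1$ requires justification.

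The statement can be proved without any continuation across $\partial B_{r_1}(0)$, and this is what the paper does. One first observes $r_0\le r_1$ directly from the definitions \eqref{eq:r1} and \eqref{eq:r0} via the axial critical point $r^*e_1$. One then rules out $r_1>\bar r$ by a cut-and-paste test function on the \emph{concentric} annulus $A_{R_1,R_0}(0,0)$: take $u_s$ on $A_{R_1,r_1}$, a constant on $A_{r_1,\bar r}$, and a rescaled $u_0$ on $A_{\bar r,R_0}$. Because $\nabla u_s=0$ on $\partial B_{r_1}(0)$ and $\nabla u_0=0$ on $\partial B_{\bar r}(0)$, integration by parts computes the Dirichlet energy of each annular piece exactly ($\lambda_1(s)\int|u_s|^p$ and $\lambda_1(0)\int|u_0|^p$ respectively), while the constant middle layer contributes nothing to the gradient term and strictly increases the $L^p$ norm; together with $\lambda_1(s)\le\lambda_1(0)$ the Rayleigh quotient drops strictly below $\lambda_1(0)$, a contradiction. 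The symmetric construction gives $\bar r\le r_0$, and $\bar r\le r_0\le r_1\le\bar r$ closes the proof using only the weak inequality $\lambda_1(s)\le\lambda_1(0)$, with no appeal to unique continuation.
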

\begin{proof}
 From the definitions of $r_0$ and $r_1$ (see \eqref{eq:r0} and \eqref{eq:r1}) it easily follows that $r_0\le r_1.$ First we show that $r_1 \leq \bar{r}$. Suppose that $r_1 > \bar{r}$. For notational simplicity, we denote an annular region with centre at the origin  as $A_{t_1,t_0}=A_{t_1,t_0}(0,0)$.  
 Now consider the following function on  $A_{R_1,R_0}$:
\begin{equation*}\label{w1}
w_1(x) = 
\left\{
\begin{aligned}
&u_{s}(x),	&&x \in A_{R_1, r_1},\\
& C_1, 		&&x \in A_{r_1, \bar{r}},\\
&u_{0}(x),	&&x \in A_{\bar{r}, R_0},
\end{aligned}
\right.
\end{equation*}
where $C_1=u_s(x)$ for $|x|=r_1.$ By multiplying with an appropriate constant we can choose $u_0$ in such a way that  $u_0(x)= C_1$ for $|x|=\bar{r}.$ 
Since  $w_1$ is continuous and piecewise differentiable on $A_{R_1, R_0}$ we have $w_1 \in W_0^{1,p}(A_{R_1, R_0})$.  To estimate $\norm{\Gr w_1}_p^p$, we derive
a few identities.  Note that for any $r\in (r_1,R_1),$ $\Gr u_s$ does not vanish on $A_{R_1,r}$ and hence $u_s \in \C^2(A_{R_1,r})$, see Proposition~\ref{regularity}. Thus $u_s \in \C^2(A_{R_1,r_1})$ and hence the 
following equation holds pointwise  in $ A_{R_1,r_1}:$
$$-\Delta_p u_{s} = \lambda_1(s) |u_{s}|^{p-2} u_s.$$ 
Multiply the above equation by $u_s$  and integrate over $A_{R_1,r_1}$ to get
$$
 \int\limits_{ A_{R_1,r_1}} -\Delta_p u_s\; u_s \dx  = \lambda_1(s) \int\limits_{ A_{R_1,r_1}}  |u_s|^{p-2} u_s\, u_s \dx. 
$$
Now by noting that $\Gr u_s =0$ on  $\partial B_{r_1}(0)$ and $u_s=0$ on  $\partial B_{R_1}(0)$, the integration by parts gives
\begin{align}\label{identity1}
\int\limits_{A_{R_1,r_1}} |\nabla u_s|^p \dx = \lambda_1(s) \int\limits_{A_{R_1,r_1}}
|u_s|^p \dx.
\end{align}
Similarly 
\begin{align}\label{identity2}
\int\limits_{A_{\bar{r},R_0}}  |\nabla u_{0}|^p \dx = \lambda_1(0) \int\limits_{A_{\bar{r},R_0}}  |u_{0}|^p \dx.
\end{align}
Now we estimate $\norm{\Gr w_1}_p^p$:
\begin{align*}
\int\limits_{A_{R_1,R_0}} |\nabla w_1|^p \dx & =  \int\limits_{A_{R_1,r_1}} |\nabla u_{s}|^p \dx +  \int\limits_{A_{\bar{r},R_0}}  |\nabla u_{0}|^p \dx.
\end{align*}
By using  \eqref{identity1} and \eqref{identity2} and inequality  $\la_1(s)\le \lambda_1(0)$ we  obtain 
$$
\int\limits_{A_{R_1,R_0}} |\nabla w_1|^p \dx \le \la_1(0) \left ( \int\limits_{A_{R_1,r_1}} |u_{s}|^p \dx + 
\int\limits_{A_{\bar{r},R_0}}  |u_{0}|^p \dx \right ).
$$
Next we estimate $\norm{w_1}_p^p$:
\begin{align*}
\displaystyle \int\limits_{A_{R_1,R_0}} |w_1|^p \dx 
&= \int\limits_{A_{R_1,r_1}} |u_{s}|^p \dx + 
\int\limits_{A_{r_1,\bar{r}}} C_1^p \dx +
\int\limits_{A_{\bar{r},R_0}}  |u_{0}|^p \dx\\
&> \int\limits_{A_{R_1,r_1}} |u_{s}|^p \dx + 
\int\limits_{A_{\bar{r},R_0}}  |u_{0}|^p \dx.
\end{align*}
Now combining the above estimates, we arrive at 
$$
\int\limits_{A_{R_1,R_0}} |\nabla w_1|^p \dx < \lambda_1(0) \int\limits_{A_{R_1,R_0}} |w_1|^p \dx,
$$
a contradiction to the definition of $\lambda_1(0)$. Hence we must have  $r_1 \le \bar{r}.$ \\
Next we show that $\bar{r}\le r_0.$  Suppose that $\bar{r}> r_0.$ In this case, we define $w_2$  on $A_{R_1,R_0}$  as below:
\begin{equation*}\label{w2}
w_2(x) = 
\left\{
\begin{aligned}
&u_{0}(x),	&&x \in A_{R_1,\bar{r} },\\
& C_2, 		&&x \in A_{\bar{r},r_0},\\
&u_s(x+se_1),	&&x \in A_{r_0, R_0},
\end{aligned}
\right.
\end{equation*}
where $C_2=u_s(x)$ for $|x+se_1|=r_0$ and $u_0$ is scaled to satisfy $u_0(x)=C_2$ for $|x|=\bar{r}.$ As before we see that 
$w_2\in W^{1,p}_0(A_{R_1,R_0})$ and 
$$\displaystyle 
\int\limits_{A_{R_1,R_0}} |\nabla w_2|^p \dx < 
\lambda_1(0)\int\limits_{A_{R_1,R_0}} |w_2|^p \dx,
$$
which again contradicts the definition of $\la_1(0).$ Thus $\bar{r}\le r_0$ and we conclude  that $r_0=\bar{r}=r_1.$  
\end{proof}
Now we give a proof of our main theorem. 

\medskip
\noindent
{\bf Proof of Theorem \ref{MainTHM}:}\\
 Suppose that there exists $s > 0$ such that $\lambda_1'(s)=0$. Now Lemmas~\ref{symmetry_nonvert}, \ref{symmetry-affine-vertical} and \ref{lem:r0=r1} give $r_0$ and $r_1$ with $r_0=r_1$.  Further, from the definitions of $r_0$ and $r_1$ (see \eqref{eq:r0} and \eqref{eq:r1}) we can deduce that
 $$
 \Gr u ((r_0+s)e_1)=0 \mbox{ and }  \Gr u (re_1)\neq 0,\, \forall  r>r_1.
 $$   
 This is a contradiction, since $r_0+s= r_1+s>r_1.$
 Thus $\la_1'(s)<0$ for all $s \in (0, R_1-R_0).$
 \qed
\begin{remark}\label{rem:mainthm}
 {\rm
 Note that in Theorem~\ref{MainTHM} we consider only the case $\overline{B_{R_0}(se_1)} \subset B_{R_1}(0)$, i.e., $s \in [0, R_1-R_0)$.  For any $s_1, s_2$ satisfying $\sqrt{R_1^2 - R_0^2} \leq s_1 < s_2 \leq R_1 + R_0$, it is geometrically evident that 
 $$
 B_{R_1}(0)\setminus \overline{B_{R_0}(s_1 e_1)} \subsetneq B_{R_1}(0)\setminus \overline{B_{R_0}(s_2 e_1)}.
 $$
 Now the strict domain monotonicity of $\lambda_1(s)$ (cf. Lemma~5.7 of \cite{Cuesta-Fucik}) gives   $\lambda_1(s_1) > \lambda_1(s_2).$ Thus  $\lambda_1(s)$ is strictly decreasing on $[\sqrt{R_1^2 - R_0^2}, R_1 + R_0]$.  Further,   $\lambda_1(s) = \lambda_1(B_{R_1}(0))$ for  $s > R_1+R_0$.
}
\end{remark}

\begin{remark} {\rm
 It can be easily seen that the measure of the set $ B_{R_1}(0)\setminus \overline{B_{R_0}(se_1)}$ strictly decreases with respect to $s \in [R_1-R_0, \sqrt{R_1^2 - R_0^2}].$ However, nothing is known about the behaviour of $\la_1(B_{R_1}(0)\setminus \overline{B_{R_0}(se_1)})$ on this interval.}
\end{remark}

\begin{remark} {\rm Let $\Om_0,\Om_1$ be any two balls  $\R^N$ such that 
$
 \Om_0\subsetneq \Om_1, |\Om_0|=|B_0| \text{ and } |\Om_1|=|B_1|,
$ where   $B_0$ and $B_1$ are concentric balls.
  Then Theorem \ref{MainTHM} gives us that
  $\la_1(\Om_1\setminus\overline{\Om_0})\le \la_1(B_1\setminus\overline{B_0}).$
  This inequality does not hold in general, if $\Om_0$ and $\Om_1$ are not balls. For example, consider the rectangular domains $\Om_0$ (sides $\frac{\pi R_0}{n}$ and $R_0 n$) and $\Om_1$ (sides $\frac{\pi R_1}{n}$ and $R_1 n$). Clearly $\la_1(\Om_1\setminus \Om_0)\ra \infty$ as $n\ra \infty$ and $\la_1(B_1\setminus \overline{B_0})=\la_1(A_{R_1,R_0}(0,0))<\infty.$}
 \end{remark}

 \section{Limit cases \texorpdfstring{$p=1$}{p=1} and \texorpdfstring{$p=\infty$}{p=infinity}}\label{section:limits}

In this section we prove Theorem~\ref{THM2}.  Recall that 
\begin{equation*}
\Lambda_\infty(s) := \lim\limits_{p \to \infty} \la_1^{1/p}(p,s)  
\quad \text{and} \quad 
\Lambda_1(s) := \lim\limits_{p \to 1} \la_1(p,s).
\end{equation*}
By Theorem~\ref{MainTHM}, for any $p>1$ and $0 \le s_1 < s_2 < R_1-R_0$ it holds that $0 < \lambda_1(p,s_2) < \lambda_1(p,s_1)$ and hence we immediately deduce that
\begin{equation}\label{ineq}
 0\le\Lambda_1(s_2) \le  \Lambda_1(s_1),\quad 0\le \Lambda_\infty(s_2) \le \Lambda_\infty(s_1).
\end{equation}
To show that $\Lambda_\infty(s)$ is continuous and strictly decreasing on $[0,R_1-R_0)$, we use the following geometric characterization of  $\Lambda_\infty(s)$ obtained in  \cite{juutinen1999}:
$$
\Lambda_\infty(s) = \frac{1}{r_\text{max}},
$$
where $r_\text{max}$ is the radius of a maximal ball inscribed in $\Om_s$.

\medskip
\noi{\bf Proof of part (i) of Theorem~\ref{THM2}}. For $s \in [0, R_1 - R_0),$ a simple calculation shows that $r_\text{max} = \frac{R_1 - R_0 + s}{2}$ and hence 
$$
\Lambda_\infty(s)=\frac{2}{R_1 - R_0 + s}.
$$
Thus one can  easily see that  $\Lambda_\infty(s)$  is continuous and strictly decreasing on  $s \in [0, R_1-R_0)$. 
\qed

\begin{remark}
 {\rm The geometric characterization of  $\Lambda_\infty(s)$ allows us to compute $\Lambda_\infty(s)$ even for  $s\ge R_1-R_0.$ Indeed, the same calculation gives us, 
 \begin{equation*}
\Lambda_\infty(s)= \left\{ \begin{array}{ll}
                               \frac{2}{R_1 - R_0 + s} \quad  &\text{for} \quad s \in [0, R_1 + R_0),\\
                               \frac{1}{R_1} \quad  &\text{for} \quad s \geq R_1 + R_0.
                               \end{array} \right.
    \end{equation*}
Clearly $\Lambda_\infty(s)$ is continuous everywhere and differentiable except at the points $s=0$ and $s = R_1 + R_0$. 
}
\end{remark}
We refer the reader to \cite{navarro2014} for related problems on the domain dependence of $\Lambda_\infty$.
\medskip

Now we consider the case $p=1$. From \eqref{ineq} we know that $\Lambda_1(s)$ is decreasing. 
To show the continuity of $\Lambda_1(s)$ and to prove part (ii) of Theorem \ref{THM2},  we use the following variational  characterization of  $\Lambda_1(s)$ given in  \cite{Kawohl-Fridman}:
$$
\Lambda_1(s) = h(s),
$$
where $h(s)$ stands for the Cheeger constant of $\Om_s$ which can be defined as
\begin{equation}\label{eq:cheeger}
h(s) := \inf \frac{|\partial D|}{|D|}.
\end{equation}
Here the infimum is taken over all Lipschitz subdomains $D$ of $\overline{\Om}_s$ and  $|\cdot|$  denote the Hausdorff measures (coincide with the usual volume and surface area for Lipschitz domains) of dimension $N-1$ in the numerator  and the dimension $N$ in the denominator. Any minimizer of \eqref{eq:cheeger} is called a Cheeger set. It is known that a Cheeger set always exists, see Theorem~8 of \cite{Kawohl-Fridman}. 

As in Section \ref{section:prelim}, by considering perturbations of $\Omega_s$ given by the vector field in \eqref{eq:perturb} we apply Theorem 1.1 of \cite{parini2015shape} to conclude that $h(s)$ is continuous on $[0, R_1-R_0)$.

\medskip
\noi{\bf Proof of part (ii) of Theorem \ref{THM2}}.
It is known (see, for instance, \cite{Ercole2015} and also references therein) that concentric annulus $\Om_0$ is calibrable, (i.e.,  $\Om_0$  itself is a Cheeger set of $\Om_0$) and hence
$$
h(0) = \frac{|\partial \Om_0 |}{|\Om_0|} = N\, \frac{R_1^{N-1} + R_0^{N-1}}{R_1^N - R_0^N}.
$$
On the other hand,  for the eccentric annulus $\Om_s$ with $s \in (0, R_1-R_0)$  it is clear that
$$
h(s) \leq \frac{|\partial \Om_s |}{|\Om_s|} = N\, \frac{R_1^{N-1} + R_0^{N-1}}{R_1^N - R_0^N} = h(0).
$$
Next we show that for $s$ sufficiently close to $R_1-R_0$ the above inequality is strict. For this we construct  an appropriate  subset $D$ of $\Om_s$  satisfying $\frac{|\partial D|}{|D|}<h(0)$. 

\begin{figure}[!ht]
	\begin{center}
		\includegraphics[width=0.6\linewidth]{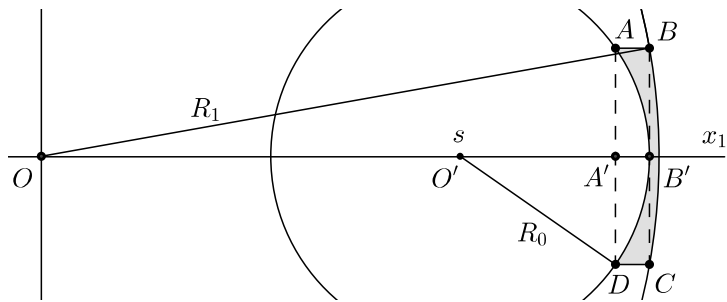}
		\caption{``Convex-concave lens'' $ABCD_{\text{lens}}$ (grey) and cylinder $ABCD_{\text{cyl}}$ (dashed).}
		\label{fig:eccentr1}
	\end{center}
\end{figure}
In this proof,  without any ambiguity,  we use $|\cdot|$ to denote the various measures such as the length, surface area and volume of the objects lie in the appropriate spaces. 
Let $\varepsilon > 0$ be sufficiently small and let $B' = |OB'| \, e_1$ be the point such that $|OB'|= \sqrt{R_1^2-\varepsilon^2}$ (see Fig.~\ref{fig:eccentr1}).
Then the hyperplane perpendicular to $e_1$ at $B'$ intersects with $B_{R_1}(0)$ by the $(N-1)$-dimensional ball $B_1$ of radius $|BB'| = \varepsilon$.
By choosing   $s = s_\varepsilon = \sqrt{R_1^2 - \varepsilon^2} - R_0$,  we see that the ball  $B_{R_0}(se_1)$ touches $B_1$.
Now consider the $N$-dimensional ``convex-concave lens'' $ABCD_{\text{lens}}$ bounded by the spherical caps $BC_{\text{cap}}$ and $AD_{\text{cap}}$ of the spheres $\partial B_{R_1}(0)$ and $\partial B_{R_0}(se_1)$, respectively, and by the lateral cylindrical surface $AB_{\text{lat}}$ generated by the segment $AB$ parallel to $e_1$. Let $ABCD_{\text{cyl}}$ be the  cylinder of radius $|BB'|$ and height $|AB|$.  For simplicity,  we denote the various positive constants which  are independent of $\varepsilon$   by  $k.$ 
For $\varepsilon > 0$ small enough, observe that 
\begin{align*}
|AB| & = |A'B'| = R_0 - \sqrt{R_0^2 - \varepsilon^2} \approx k\varepsilon^2; \\
|AD_{\text{cap}}| & > |BC_{\text{cap}}| > |B_1| = k \varepsilon^{N-1};\\
|ABCD_{\text{lens}}| & < |ABCD_{\text{cyl}}| = |AB| |B_1| \approx k \varepsilon^{2} \, \varepsilon^{N-1};\\ 
|AB_{\text{lat}}| &= |AB| |\partial B_1| \approx k \varepsilon^{2} \, \varepsilon^{N-2}.
\end{align*} 
Now by making  use of  the above estimates we obtain
\begin{align*}
\frac{|\partial\left(\Om_s \setminus ABCD_\text{lens}\right)|}{|\Om_s \setminus ABCD_\text{lens}|} = 
\frac{|\partial \Om_s| - |AD_{\text{cap}}| - |BC_{\text{cap}}| + |AB_{\text{lat}}|}{|\Om_s|-|ABCD_{\text{lens}}|}
< \frac{|\partial \Om_s| - 2k \varepsilon^{N-1} + k \varepsilon^{N}}{|\Om_s|-k \varepsilon^{N+1}} < 
\frac{|\partial \Om_s|}{|\Om_s|}
\end{align*}
for sufficiently small $\varepsilon$.  Therefore, there exists $s>0$ such that $h(s)<h(0).$ Now define
\begin{equation}\label{eq:s^*}
s^* := \inf\{s \in [0, R_1-R_0):~ h(0) > h(s)\}.
\end{equation}
 Since $h(s)$ is continuous,  by the definition of  $s^*,$ we easily see that  $h(0) = h(s^*)$   and $h(s^*) > h(s)$ for any $s \in (s^*, R_1-R_0)$.
\qed

\begin{remark}\label{rem:cheeger}
 {\rm 
 	Clearly $h(s)=h(0)$ for every $s\in [0,s^*].$ Thus, if $s^*>0$, then the strict monotonicity  of  $\lambda_1(s)$  fails for $p=1$. However,  whether $s^*>0$ or not is still open. 
 Further, the strict monotonicity of $h(s)$ on the interval $[s^*,R_1-R_0]$ is not answered yet. It is worth to mention that a shape derivative formula for $h_1(\Om)$ is obtained in \cite{parini2015shape} for  $\Om$ having just one Cheeger set. However, the uniqueness of the Cheeger set for eccentric annular regions $\Om_s$ is not known.
 }
\end{remark}

 \section{Application to the Fu\v{c}ik spectrum}\label{fucik}
In this section we prove Theorem \ref{FucikTHM}. For this end, we use Theorem \ref{MainTHM} and the variational characterization \eqref{eq:Fucik} of $\mathscr{C}$, the first nontrivial curve of the Fu\v{c}ik spectrum for the eigenvalue problem~\eqref{F}, see~\cite{Cuesta-Fucik}.
Recall that $\mathscr{C}$ is constructed from  points $(t+c(t), c(t))$, where
$$
c(t) = \inf_{\gamma \in \Gamma}
\max_{u \in \gamma[-1,1]} \left(\int\limits_{\Omega}|\nabla u|^p \, \dx - t \int\limits_{\Omega}(u^+)^p \, \dx
\right), \quad t \geq 0,
$$
and their reflections with respect to the diagonal. 
See \eqref{Gamma} for the definition of $\Gamma$.

\medskip
\noi{\bf Proof of Theorem \ref{FucikTHM}:} 
Let $\Om$ be a bounded radial domain. Suppose there exist a point on $\mathscr{C}$ and a corresponding eigenfunction $u$ which is radial. Without loss of generality, we can suppose that $t \geq 0$ (otherwise we consider $-u$ instead of $u$). 
Thus $u$ satisfies the following equation:
 \begin{equation*}
\left.
\begin{aligned}
-\Delta_p u &=(t+c(t))(u^+)^{p-1} -c(t) (u^-)^{p-1} &&\text{in }~ \Om,\\
u &= 0 &&\text{on }~ \partial \Om.
\end{aligned}
\right\}
\end{equation*}
By Theorem 2.1 of \cite{Cuesta-Nodal}, we know that $u$ has exactly two nodal domains, $N^+ := \{x\in \Om : u(x)>0\}$ and $N^- := \{x\in \Om : u(x)<0\}.$ 
Since the restriction of $u$ to each of the nodal domains is an eigenfunction of $-\De_p$ with a constant sign, we easily get 
\begin{equation}\label{eqn10}
\la_1(N^+)=t+c(t)
\text{ and }
\la_1(N^-)=c(t).
\end{equation}
Since $u$ is radial and $\Om$ is radially symmetric, the nodal domains are also radially symmetric. 
Assume for definiteness that $u$ is negative near the outer boundary of $\Om.$ 
Thus there exists $R>0$ such that $N^+=\{x\in \Om: |x|<R \}$ and $N^-=\{x\in \Om:|x|>R\}.$ If $\Om$ is a ball, say $B_{R_1}(0),$ then $N^+= B_R(0)$ and $N^-=A_{R_1,R}(0,0).$ 
Now for $s\in (0,R_1-R),$ 
by using \eqref{eqn10} and Theorem \ref{MainTHM}  we obtain $\la_1(B_{R}(se_1))=t+c(t)$ and  $\la_1(A_{R_1,R}(0,se_1))< c(t)$. Further, using the continuity of $\la_1(\Omega)$ (see, for instance, Theorem 1 of \cite{Melian2001})  
we can find $\wide{R}\in (R,R_1)$ such that 
\begin{equation*}\label{ineq1}
 \la_1(B_{\wide{R}}(se_1))< t+ c(t)
\text{ and }
 \la_1(A_{R_1,\wide{R}}(0,se_1))< c(t).
\end{equation*}
If $\Om$ is an annulus, say $A_{R_1,R_0}(0,0),$ then we have $N^+= A_{R,R_0}(0,0)$ and $N^-=A_{R_1,R}(0,0).$ Now for $0<s< \min\{R_1-R,R-R_0\}$ by using \eqref{eqn10} and Theorem \ref{MainTHM}  we obtain 
\begin{equation*}\label{ineq2}
 \la_1( A_{R,R_0}(se_1,0))< t+ c(t)
\text{ and }
 \la_1(A_{R_1,R}(0,se_1))< c(t).
\end{equation*}
In either case, we have two disjoint domains $\Om_1$ and $\Om_2$ such that 
$$
\la_1(\Om_1)<t+c(t)
\text{ and }
\la_1(\Om_2)<c(t).
$$ 
Let $u_1$ and $u_2$ be corresponding eigenfunctions. Clearly $u_1$ and $u_2$ have disjoint supports and 
$$
\int\limits_{\Omega}|\nabla u_1|^p \dx <  (t+ c(t))\int\limits_{\Omega}|u_1|^p \dx
\text{ and }
\int\limits_{\Omega}|\nabla u_2|^p \dx <  c(t) \int\limits_{\Omega}|u_2|^p \dx.
$$
The above inequalities lead to a contradiction to the definition \eqref{eq:Fucik} of $c(t)$ by the same arguments as in the proof of Theorem 3.1  of  \cite{Cuesta-Fucik}. Thus $u$ must be nonradial. This completes the proof. 
\qed

\addcontentsline{toc}{section}{\refname}
\bibliographystyle{abbrv}
\bibliography{ref1}

\medskip
\medskip
\noindent
\textbf{T. V. Anoop}:\quad
	Department of Mathematics, Indian Institute of Technology Madras, Chennai 36, India.\\  
	\textit{Email}: \texttt{anoop@iitm.ac.in}
	
\medskip
\noindent
\textbf{Vladimir Bobkov}:\quad
	Department of Mathematics and NTIS, Faculty of Applied Sciences, University of West Bohemia,
	Univerzitn\'i 8, Plze\v{n} 306 14, Czech Republic.  \\
	Institute of Mathematics, Ufa Scientific Center, Russian Academy of Sciences, Chernyshevsky str. 112, Ufa 450008, Russia. \\
	\textit{Email}: \texttt{bobkov@kma.zcu.cz} 

\medskip
\noindent
\textbf{Sarath Sasi}:\quad 
	School of Mathematical Sciences, National Institute for Science Education and Research Bhubaneswar, Jatni 752050, India. \\
	\textit{Email}: \texttt{sarath@niser.ac.in}

\end{document}